\documentclass[11pt,reqno]{amsart}
\usepackage[utf8]{inputenc}

\usepackage[margin=1in]{geometry}
\usepackage{amsmath}
\usepackage{amsfonts}
\usepackage{amssymb}
\usepackage{amsthm}
\usepackage{enumitem}
\usepackage{mathrsfs}
\usepackage{mathtools}
\usepackage{tikz-cd}
\usepackage{enumitem}

\usepackage[colorlinks=true,hyperindex, linkcolor=magenta, pagebackref=false, citecolor=cyan,pdfpagelabels]{hyperref}

\newtheorem{theorem}{Theorem}[section]

\newtheorem{lemma}[theorem]{Lemma}

\theoremstyle{definition}

\newtheorem{remark}[theorem]{Remark}

\newcommand{\bZ}{{\mathbf Z}}

\newcommand{\F}{{\mathbf F}}
\newcommand{\Q}{{\mathbf Q}}
\newcommand{\Z}{{\mathbf Z}}

\newcommand{\cE}{\mathcal{E}}

\DeclareMathOperator{\ad}{ad}

\DeclareMathOperator{\BC}{BC}

\DeclareMathOperator{\Fr}{Fr}

\DeclareMathOperator{\Gal}{Gal}

\DeclareMathOperator{\Irr}{Irr}

\DeclareMathOperator{\Rep}{Rep}
\DeclareMathOperator{\Res}{Res}
\DeclareMathOperator{\rk}{rk}

\DeclareMathOperator{\Transp}{Transp}

\newcommand{\wtil}[1]{\wtil}

\newcommand{\ol}[1]{\overline{#1}}
\newcommand{\co}[1]{\colon}

\title{Base change and the Mackey formula in Deligne--Lusztig theory}
\author{Sean Cotner}

\begin{document}

\begin{abstract}
    Using ideas of Digne and recent work of Gaitsgory--Rozenblyum--Varshavsky, we show that Lusztig induction commutes with (a minor variant of) the Glauberman correspondence for large prime degree field extensions. This allows one to reduce the Mackey formula in Deligne--Lusztig theory to the case of ``large'' base fields, in which case the result is due to Bonnaf\'e.
\end{abstract}

\maketitle

\section{Introduction}

Let $G$ be a connected reductive group over a finite field $\F_q$. For a twisted Levi $\F_q$-subgroup $L \subset G$ whose base extension $L_{\ol\F_q}$ is a Levi factor of a parabolic $\ol\F_q$-subgroup $P \subset G_{\ol\F_q}$, Lusztig defined the \emph{Lusztig induction} $R_{L, P}^G\colon \Rep(L(\F_q)) \to \Rep(G(\F_q))$ and its adjoint \emph{Lusztig restriction} $^*R^G_{L, P}$, generalizing Harish--Chandra induction and restriction. This generalization suggests the \emph{Mackey formula}, which states that if $(L, P)$ and $(M, Q)$ are two pairs as above then
\begin{equation}\label{eqn:mackey-formula}
    ^*R^G_{L, P} \circ R_{M, Q}^G = \sum_{w \in L(\F_q)\backslash S(L,M)(\F_q)/M(\F_q)} R_{L \cap {}^wM, L \cap {}^wQ}^L \circ {}^*R_{L \cap {}^wM, P\cap {}^wM}^{^wM} \circ \ad w
\end{equation}
in the Grothendieck group $K_0(\Rep(L(\F_q)))$, where $S(L,M)$ is the subscheme of $G$ parameterizing those elements $g$ such that $L$ and $^gM$ share a common maximal torus.

The Mackey formula \eqref{eqn:mackey-formula} is a natural statement which shows up in many calculations involving Lusztig induction, since (by adjunction) it governs the pairings of two Lusztig inductions. As such, it is a common hypothesis in results in the literature; in particular, it is used in modular representation theory through its role in generalized Harish--Chandra theory. See for instance \cite{Eng13}, \cite[\S 4.7]{GM20}, \cite{Hol22}, and \cite{Ros25} for theorems assuming the Mackey formula. The importance of the Mackey formula is also exposed in detail in \cite{DM20}.

The validity of the Mackey formula \eqref{eqn:mackey-formula} is a long-standing open problem in Deligne--Lusztig theory, but it is now known in most cases:
\begin{enumerate}
    \item $L$ or $M$ is a maximal torus \cite[Theorem 7]{DL83} (building on \cite[Theorem 6.8]{DL76}),
    \item $P$ and $Q$ are defined over $\F_q$ \cite[Lemma 2.5]{LS79} (attributed to Deligne),
    \item $q$ is ``large'' \cite[Th\'eor\`eme 5.1.1]{Bon98} (also due to Deligne in unpublished work from 1975; Lusztig has informed us that an ineffective version also follows from \cite{Lus85} and \cite{Lus90}),
    \item $q > 2$ or $G$ does not contain a simple component of types $^2\mathrm{E}_6$, $\mathrm{E}_7$, or $\mathrm{E}_8$ \cite[Theorem 3.9]{BM11} (see also \cite[Theorem 5.2.1]{Bon00} for the result in type A),
    \item $q = 2$ and the center $Z(G)$ is connected and $G$ has no simple factor of type $\mathrm{E}_8$ \cite[Theorem 1.6]{Tay18}.
\end{enumerate}
The proofs of (1), (2), and (3) proceed along general principles, while the proof of (4) is ``as ugly as possible'' in the words of its authors, relying on extensive computer calculations and strange properties of the set of semisimple elements of $G(\F_q)$. The proof of (5) is based on that of (4). The main aim of this paper is to give a conceptual proof of the following theorem.

\begin{theorem}\label{theorem:main}
    The Mackey formula \eqref{eqn:mackey-formula} is true.
\end{theorem}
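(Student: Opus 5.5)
The plan is to reduce the Mackey formula over $\F_q$ to the Mackey formula over $\F_{q^\ell}$ for a large prime $\ell$, where Bonnaf\'e's theorem \cite[Th\'eor\`eme 5.1.1]{Bon98} applies. The transfer between the two fields will be a base change map in the sense of the Glauberman correspondence. Let $\ell$ be a prime, large in terms of $q$, the rank of $G$ and $|W|$, and coprime to $|G(\F_q)|$. Then $\sigma = \Fr_q$ acts on $G(\F_{q^\ell})$ with order $\ell$ and fixed points $G(\F_q)$. Glauberman's correspondence gives a bijection
\[
\Irr(G(\F_{q^\ell}))^{\sigma} \to \Irr(G(\F_q)).
\]
Equivalently, following Digne, we use Shintani-type twisted characters: for a $\sigma$-stable irreducible $\rho$, extend $\rho$ to $G(\F_{q^\ell}) \rtimes \langle\sigma\rangle$ and evaluate the extended character on the coset $G(\F_{q^\ell})\sigma$. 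I would package this as a linear map
\[
\BC\colon K_0(\Rep(G(\F_{q^\ell})))^{\sigma} \to K_0(\Rep(G(\F_q))) \otimes \Z[1/\ell]
\]
(or into class functions), defined uniformly for $G$, $L$, $M$ and the groups $L\cap {}^wM$. It should be surjective up to sign normalization, and the signs must be fixed canonically so that $\BC$ is compatible with $\ad w$.

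The key step is to show that Lusztig induction and restriction commute with $\BC$:
\[
\BC \circ R^G_{L,P} = R^G_{L,P} \circ \BC,
\]
and similarly for ${}^*R$. The inner $R^G_{L,P}$ is taken over $\F_{q^\ell}$, defined using the same $P$, which is stable because $L$ is an $\F_q$-form. To prove this, write $R^G_{L,P}$ as the cohomology of the Deligne--Lusztig variety $Y_P = \{g : g^{-1}F(g) \in F(U)\}$. Over $\F_{q^\ell}$ the relevant variety $Y_P^{(\ell)}$, defined with $F^\ell$, carries a Lefschetz-type action of $\sigma$: on points it sends $g$ to $F(g)$, roughly, twisted as in Digne's work. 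Its twisted Lefschetz numbers against $(g,l)\sigma$ should match the untwisted Lefschetz numbers of $(g,l)$ on $Y_P$. This is an instance of the Gaitsgory--Rozenblyum--Varshavsky trace formula for a Frobenius composed with an automorphism of finite order, or the Deligne--Lusztig fixed point formula for $F$ composed with finite-order automorphisms, identifying
\[
\operatorname{tr}\bigl((g,l)\circ\sigma \mid H_c^*(Y^{(\ell)}_P)\bigr) = \operatorname{tr}\bigl((g',l') \mid H_c^*(Y_P)\bigr),
\]
where $(g',l')$ are norm-related elements. Combined with the character-theoretic description of Glauberman and Shintani correspondents as twisted traces, this gives the commutation. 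Since $\ell$ is prime to the group orders, the characters of the semidirect products are controlled by $\ell$-adic Clifford theory, so only traces on the coset need to be compared.

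Granting commutation, both sides of the Mackey formula over $\F_q$ are obtained by applying $\BC$ to the corresponding sides over $\F_{q^\ell}$. For the sum side, one checks that $\sigma$-stable double cosets in $L(\F_{q^\ell})\backslash S(L,M)(\F_{q^\ell})/M(\F_{q^\ell})$ correspond via Lang's theorem to the double cosets over $\F_q$. Here we need $\ell$ prime to component group orders, and non-stable cosets come in $\sigma$-orbits of size $\ell$. Those orbits contribute zero under the twisted trace, since induced representations from non-stable data have vanishing twisted character. The Mackey formula over $\F_{q^\ell}$ holds by Bonnaf\'e for $\ell \gg 0$, and surjectivity of $\BC$ onto $K_0(\Rep(M(\F_q)))$ then yields the formula over $\F_q$.

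I expect the main obstacle to be the commutation step. First, $Y_P$ is not proper, so a naive Lefschetz formula fails; this is where Gaitsgory--Rozenblyum--Varshavsky, or Deligne--Lusztig's contracting fixed point argument, is needed. Second, the signs and normalizations in the Glauberman correspondence must match the geometric twisted traces. I would first try to prove the trace identity for a single element via the $\ell$-th power norm map on $Y_P^{(\ell)}$, using Digne's comparison, and then bootstrap to the representation-level statement.
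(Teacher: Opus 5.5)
Your architecture matches the paper's: Glauberman base change along a large prime degree extension, a commutation theorem for $R_L^G$ and ${}^*R_L^G$, identification of double cosets over $\F_q$ and $\F_{q^\ell}$, and Bonnaf\'e's theorem over $\F_{q^\ell}$. The gap is in the commutation step, exactly at ``only traces on the coset need to be compared''.

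The twisted trace identity you describe is what the paper proves in Lemmas~\ref{lemma:dig99-3.5} and~\ref{lemma:psi-tilde}, via Digne and the Digne--Michel character formula. It yields \emph{some} extension $\widetilde\psi$ of $R^{G_{\F_{q^\ell}}}_{L_{\F_{q^\ell}}}(\rho_\ell)$ to $G(\F_{q^\ell})\rtimes\langle\sigma\rangle$ whose values on $G(\F_q)\sigma$ give $R_L^G(\BC(\rho_\ell))$. But $\BC$ is defined through the \emph{canonical} extensions $\widetilde\chi_i$ of the $\sigma$-stable constituents $\chi_i$. Nothing forces the geometric $\sigma$-action on the multiplicity spaces in $H^*_c$ to agree with those canonical extensions. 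Write
\[
\widetilde\psi=\sum_i\sum_{j=0}^{\ell-1} b_{ij}\,\widetilde\chi_i\phi_j+(\text{characters induced from non-stable constituents}),
\]
where $\phi_j$ is the linear character trivial on $G(\F_{q^\ell})$ with $\phi_j(\sigma)=\zeta^j$. The coset restriction of $\widetilde\psi$ is $\sum_i\bigl(\sum_j b_{ij}\zeta^j\bigr)\BC(\chi_i)$. By contrast, $\BC$ of its restriction to $G(\F_{q^\ell})$ is $\sum_i\bigl(\sum_j b_{ij}\bigr)\BC(\chi_i)$. Integrality of the first only forces $b_{i1}=\cdots=b_{i,\ell-1}$, so the two coefficients differ by $\ell b_{i1}$. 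You therefore get a congruence modulo $\ell$, not an equality. This is not a sign-normalization issue, and no choice of normalization removes it.

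The missing idea is a rigidity argument for large $\ell$, which is where most of the paper's work goes:
\begin{enumerate}
\item[(i)] For $\ell>|W_0|^2$, every member of the relevant geometric Lusztig series over $\F_{q^\ell}$ is $\sigma$-stable, because such series have at most $|W_0|^2$ members (Lemmas~\ref{lemma:lusztig-series-size-bound} and~\ref{lemma:glauberman-lusztig-series-bijection}).
\item[(ii)] There is a bound $|\langle R_L^G(\rho),R_L^G(\rho)\rangle|\le C_G$ uniform in $q$ (Lemma~\ref{lemma:mackey-formula-bound}). It is proved by induction on semisimple rank from Bonnaf\'e's large-$q$ theorem and his results on Lusztig series.
\item[(iii)] Lemma~\ref{lemma:elementary-number-theory} forces $b_{ij}=0$ for $j\neq 0$ once $\bigl|\sum_j b_{ij}\zeta^j\bigr|+\bigl|\sum_j b_{ij}\bigr|<\ell$.
\end{enumerate}
Your proposal has no substitute for any of this.

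``Similarly for ${}^*R$'' also skips a separate argument, via Lemmas~\ref{lemma:tori-descend} and~\ref{lemma:geometric-conjugacy-split}, that non-stable $\rho_\ell$ induce to nothing $\sigma$-stable. You need this when composing functors, since $\BC\bigl(\sum_k\sigma^k\Phi\bigr)=\ell\,\BC(\Phi^{\mathrm{st}})$, not $0$; here $\Phi^{\mathrm{st}}$ is the part of $\Phi$ supported on $\sigma$-stable irreducibles. The same point undermines your claim that non-stable double cosets ``contribute zero''. The paper instead shows that there are none for $\ell>\rk G+1$ (Lemma~\ref{lemma:tori-descend}).

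Three smaller corrections:
\begin{enumerate}
\item[(a)] $Y^{(\ell)}_P$ carries no $\sigma$-action unless $F(P)=P$. One must pass to $\Res_{\F_{q^\ell}/\F_q}G$ with the $\sigma$-stable parabolic $P_1$.
\item[(b)] Gaitsgory--Rozenblyum--Varshavsky enters as independence of the parabolic ($P_1$ versus Digne's $P_2$, and for the Green functions), not as a trace formula for non-proper varieties.
\item[(c)] You need $\ell\nmid|G(\F_{q^\ell})|$, which is Lemma~\ref{lemma:size-of-finite-group-of-lie-type}.
\end{enumerate}
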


One reason for interest in the Mackey formula \eqref{eqn:mackey-formula} is that it implies that $R_{L,P}^G$ is independent of $P$. This was recently proven (without the Mackey formula) in work of Gaitsgory--Rozenblyum--Varshavsky \cite[Theorem 8.7.2]{GRV26}, and we will use this theorem crucially in what follows. In particular, from now on we will write (for instance) $R_L^G$ in place of $R_{L,P}^G$. Our proof also uses cases (1) and (3) above of the Mackey formula, but it does not use cases (2), (4), or (5).

We now give a few details about the proof. If $\ell$ is a prime number not dividing the order of $G(\F_q)$, then it is easy to see (Lemma~\ref{lemma:size-of-finite-group-of-lie-type}) that $\ell$ also does not divide the order of $G(\F_{q^\ell})$. The Glauberman correspondence \cite[Theorem 3]{Gla68} shows that there exists a natural ``base change'' isometry
\[
\BC_\ell\colon K_0(\Rep(G(\F_q))) \to K_0(\Rep(G(\F_{q^\ell})))^{\Gal(\F_{q^\ell}/\F_q)}.
\]
The first observation is that, by a result of Digne on Shintani descent \cite[Corollaire 3.5]{Dig99}, if $\rho$ is an irreducible representation of $L(\F_q)$ then the class function $R_{L_{\F_{q^\ell}}}^{G_{\F_{q^\ell}}}(\BC_\ell(\rho))$ on $G(\F_{q^\ell})$ admits an extension to a class function $\widetilde\psi$ on $G(\F_{q^\ell}) \rtimes \Gal(\F_{q^\ell}/\F_q)$ with the property that, if $\sigma$ is a generator of $\Gal(\F_{q^\ell}/\F_q)$, then
\[
\widetilde\psi(g \rtimes \sigma) = R_L^G(\rho)(g)
\]
for all $g \in G(\F_q)$. Actually, Digne's result requires the characteristic of $\F_q$ to be good for $G$, a condition which does not hold in the remaining open cases of the Mackey formula. However, the assumption on the characteristic only appears in Digne's proof because of features of Shintani descent which do not appear for the Glauberman correspondence\footnote{The Glauberman correspondence is not quite a special case of Shintani descent; in fact, it is a ``twisted'' version of a special case of Shintani descent. (See \cite[Corollaire 2.5]{Dig86}.) This twisting is really the key point in generalizing Digne's result, but we only know how to perform it for large $\ell$.}; see Lemma~\ref{lemma:psi-tilde} for an extension.

The above discussion suggests the following theorem.

\begin{theorem}\label{thm:base-change-commutes-with-lusztig-induction}
    There is a constant $D_G$, depending only on the root datum of $G$, such that for all twisted Levi $\F_q$-subgroups $L \subset G$ and all prime numbers $\ell > D_G$, we have
    \begin{equation}\label{eqn:bc-commutes-with-dl-induction}
    \BC_\ell \circ R_L^G = R_{L_{\F_{q^\ell}}}^{G_{\F_{q^\ell}}} \circ \BC_\ell\colon K_0(\Rep(L(\F_q))) \to K_0(\Rep(G(\F_{q^\ell})))
    \end{equation}
    and
    \begin{equation}\label{eqn:bc-commutes-with-dl-restriction}
    \BC_\ell \circ {}^*R^G_L = {}^*R^{G_{\F_{q^\ell}}}_{L_{\F_{q^\ell}}} \circ \BC_\ell\colon K_0(\Rep(G(\F_q))) \to K_0(\Rep(L(\F_{q^\ell}))).
    \end{equation}
\end{theorem}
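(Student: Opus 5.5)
The plan is to compare both sides of \eqref{eqn:bc-commutes-with-dl-induction} on the non-identity coset of $\widetilde H := G(\F_{q^\ell}) \rtimes \Gamma$, where $\Gamma = \Gal(\F_{q^\ell}/\F_q) = \langle \sigma \rangle$, and then remove the remaining ambiguity with a norm computation. Restriction \eqref{eqn:bc-commutes-with-dl-restriction} should then follow formally. Since $\BC_\ell$ is an isometry and Lusztig restriction is adjoint to induction over both $\F_q$ and $\F_{q^\ell}$, \eqref{eqn:bc-commutes-with-dl-restriction} is the adjoint of \eqref{eqn:bc-commutes-with-dl-induction}. This needs one extra check: the image of ${}^*R^{G_{\F_{q^\ell}}}_{L_{\F_{q^\ell}}}\circ\BC_\ell$ lies in the $\Gamma$-invariant span of $\BC_\ell$-images, and this check is part of the same argument. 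So I concentrate on induction.

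\textbf{Step 1 (the coset $H\sigma$ is controlled by $G(\F_q)$).} Put $H = G(\F_{q^\ell})$. By Lemma~\ref{lemma:size-of-finite-group-of-lie-type}, $\ell \nmid |H|$. Every element $h\sigma$ then has $\ell$-part conjugate to $\sigma$ by Schur--Zassenhaus, so every $H$-class in $H\sigma$ meets $C_H(\sigma)\sigma = G(\F_q)\sigma$. Moreover two elements $g\sigma, g'\sigma$ with $g,g'\in G(\F_q)$ are $H$-conjugate exactly when $g, g'$ are $G(\F_q)$-conjugate; this is the standard Glauberman lemma. Hence a class function on $H\sigma$ is the same as a class function on $G(\F_q)$.

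Recall Glauberman's characterization of the correspondence. Each $\Gamma$-invariant $\chi \in \Irr(H)$ has a canonical extension $\widetilde\chi$ (the one with determinantal order prime to $\ell$), and $\widetilde\chi(g\sigma) = \epsilon_\chi\, \pi(\chi)(g)$ with $\epsilon_\chi = \pm 1$. The variant $\BC_\ell$ absorbs the sign, so that $\widetilde{\BC_\ell(\pi)}(g\sigma) = \pi(g)$.

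\textbf{Step 2 (matching the invariant constituents).} Fix $\rho \in \Irr(L(\F_q))$ and set $\psi = R_{L_{\F_{q^\ell}}}^{G_{\F_{q^\ell}}}(\BC_\ell\rho)$. This is $\Gamma$-invariant, and we write $\psi = \sum_{\chi\ \Gamma\text{-inv}} b_\chi \chi + \psi'$, where $\psi'$ is a sum over free $\Gamma$-orbits. Any extension of $\psi'$ to $\widetilde H$ can be taken induced from $H$, so it vanishes on $H\sigma$. Thus the extension $\widetilde\psi$ of Lemma~\ref{lemma:psi-tilde} has the form $\widetilde\psi = \sum b_\chi \lambda_\chi \widetilde\chi + (\text{a function vanishing on } H\sigma)$, where the $\lambda_\chi$ are linear characters of $\Gamma$.

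Evaluate at $g\sigma$ and use $\widetilde\psi(g\sigma) = R_L^G(\rho)(g)$. Writing $R_L^G(\rho) = \sum a_\pi \pi$, linear independence of $\Irr(G(\F_q))$ gives $b_\chi\lambda_\chi(\sigma) = a_{\pi}$ when $\chi = \BC_\ell(\pi)$. Here $b_\chi,a_\pi\in\Z$ and $\lambda_\chi(\sigma)$ is an $\ell$-th root of unity with $\ell$ odd. So either both sides vanish or $\lambda_\chi = 1$, and in all cases $b_\chi = a_\pi$. Therefore $\psi = \BC_\ell(R_L^G\rho) + \psi'$.

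\textbf{Step 3 (killing $\psi'$; the main obstacle).} It remains to show $\psi' = 0$, and this is where largeness of $\ell$ enters. I would compare norms. First, $\langle\psi,\psi\rangle = \langle R_L^G\rho, R_L^G\rho\rangle + \langle\psi',\psi'\rangle$. Second, $q^\ell$ is large once $\ell > D_G$ for suitable $D_G$ depending only on the root datum, so Bonnaf\'e's case (3) of the Mackey formula applies over $\F_{q^\ell}$. It expresses $\langle\psi,\psi\rangle$ as a sum over $L(\F_{q^\ell})\backslash S(L,L)(\F_{q^\ell})/L(\F_{q^\ell})$ of pairings involving Lusztig induction and restriction for the smaller groups $L\cap{}^wL$.

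The pieces then go as follows:
\begin{itemize}
\item The non-$\Gamma$-fixed double cosets should pair to zero against the $\Gamma$-invariant class $\BC_\ell\rho$, or be absent: for $\ell$ large relative to the finite Weyl-type data, $\Gamma$ acts trivially on the component set of $S(L,L)$.
\item The $\Gamma$-fixed double cosets correspond to those over $\F_q$ by Lang's theorem.
\item The terms for proper Levis are handled by induction on $\dim G$, applying the theorem (both \eqref{eqn:bc-commutes-with-dl-induction} and \eqref{eqn:bc-commutes-with-dl-restriction}) to $L$ and ${}^wL$.
\end{itemize}
This should show $\langle\psi,\psi\rangle$ equals the corresponding Mackey-type sum over $\F_q$. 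The difficulty is that over $\F_q$ we do not yet know that this sum equals $\langle R_L^G\rho,R_L^G\rho\rangle$. To avoid circularity I would instead prove the inequality $\langle\psi,\psi\rangle \le \langle R_L^G\rho,R_L^G\rho\rangle$ directly. The first thing I would try is positivity: apply Steps 1--2 to ${}^*R$, which yields the same type of decomposition ${}^*R^{G_{\F_{q^\ell}}}_{L_{\F_{q^\ell}}}(\BC_\ell\pi) = \BC_\ell({}^*R^G_L\pi) + (\text{free orbits})$. Then compute $\langle\psi,\psi\rangle$ by adjunction and track which free-orbit terms can survive. Making this bookkeeping rigorous, and pinning down $D_G$ so that the Galois action on $S(L,L)$ is trivial, is the step I expect to be hardest.
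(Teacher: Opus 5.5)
Your overall strategy of comparing on the coset $H\sigma$ via Lemma~\ref{lemma:psi-tilde} is the paper's. Step 2, however, has a genuine gap, and it sits exactly where the paper does its main work.

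Restricting $\widetilde\psi$ to $H$ tells you only that its component along a $\Gamma$-stable $\chi$ is $\widetilde\chi\cdot\mu_\chi$. Here $\mu_\chi$ is a virtual character of $\Gamma$ with $\mu_\chi(1)=b_\chi$. It is integral because $\widetilde\psi$ is a Lusztig induction for the disconnected group $G_\ell\rtimes\langle\sigma\rangle$, a point you should state. Nothing forces $\mu_\chi=b_\chi\lambda_\chi$ for a single linear character $\lambda_\chi$. Indeed, $\widetilde\chi\cdot\mathrm{reg}_\Gamma=\mathrm{Ind}_H^{\widetilde H}\chi$ vanishes on $H\sigma$ but restricts to $\ell\chi$ on $H$. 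So adding $c\,\mathrm{Ind}_H^{\widetilde H}\chi$ to $\widetilde\psi$ changes $b_\chi$ by $\ell c$ without changing $\widetilde\psi|_{H\sigma}$.

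From $\mu_\chi(\sigma)=a_\pi\in\Z$ you can only deduce $\mu_\chi=a_\pi\cdot 1_\Gamma+c_\chi\,\mathrm{reg}_\Gamma$, i.e.\ $b_\chi\equiv a_\pi\pmod{\ell}$. The correct output of Step 2 is therefore $\psi=\BC_\ell(R_L^G\rho)+\ell\xi+\psi'$, with $\xi$ supported on $\Gamma$-stable irreducibles. Your Step 3 never addresses $\xi$.

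The missing idea is a bound $|a_\pi|+|b_\chi|<\ell$ that is uniform in $q$. The paper gets it from Lemma~\ref{lemma:mackey-formula-bound}, applied over both $\F_q$ and $\F_{q^\ell}$ (the constant depends only on the root datum), and then concludes with Lemma~\ref{lemma:elementary-number-theory}. That lemma is itself substantive: it is an induction on semisimple rank. It uses Bonnaf\'e's large-$q$ Mackey formula, the Lusztig-series size bounds of Lemmas~\ref{lemma:lusztig-series-size-bound} and~\ref{lemma:size-bound-for-torus-character-pairs}, and \cite[Corollaire 11.11]{Bon06}. It also needs a separate finiteness argument for the finitely many small $q$. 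Your $D_G$ has to dominate this constant.

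Step 3, as you acknowledge, is not yet a proof, and it aims at more than is needed. You never have to prove $\langle\psi,\psi\rangle\le\langle R_L^G\rho,R_L^G\rho\rangle$, which amounts to the Mackey formula over $\F_q$ paired with $\rho$ and is therefore circular. The paper kills $\psi'$ cheaply. By \cite[Th\'eor\`eme 11.10]{Bon06}, all constituents of $\psi$ lie in the single geometric series $\cE(G_{\F_{q^\ell}},(T_{\F_{q^\ell}},\theta_\ell))$. That series is $\Gamma$-stable of size at most $|W_0|^2<\ell$, hence pointwise $\Gamma$-fixed (Lemma~\ref{lemma:glauberman-lusztig-series-bijection}). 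Alternatively, once $\langle\psi,\psi\rangle\le C_G<\ell$, a nonzero free-orbit part would contribute at least $\ell$ to the norm. What matters is an upper bound, not an identity.

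Likewise, the ``extra check'' for restriction is not formal. You must show that for non-$\Gamma$-stable $\rho_\ell$, no constituent of $R_{L_{\F_{q^\ell}}}^{G_{\F_{q^\ell}}}(\rho_\ell)$ is $\Gamma$-stable. The paper proves this with Lusztig series, Lemma~\ref{lemma:tori-descend} for $\ell>\rk G+1$, Lemma~\ref{lemma:geometric-conjugacy-split}, and an argument that the relevant torus character factors through $N_{\F_{q^\ell}/\F_q}$. Alternatively, the bound \eqref{eqn:dl-restriction-pairing-bound} over $\F_{q^\ell}$ kills the free-orbit part of ${}^*R^{G_{\F_{q^\ell}}}_{L_{\F_{q^\ell}}}(\BC_\ell\chi)$ for large $\ell$. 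With that in place, your adjunction-plus-isometry argument for the stable case matches the paper's.
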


With Theorem~\ref{thm:base-change-commutes-with-lusztig-induction} in hand, the proof of Theorem~\ref{theorem:main} is straightforward: since $\BC_\ell$ is injective, it suffices to prove \eqref{eqn:mackey-formula} after composition with $\BC_\ell$, and we can thereby pass to the case that $q$ is ``large'', a case already handled by Bonnaf\'e \cite[Th\'eor\`eme 5.1.1]{Bon98}.

To prove Theorem~\ref{thm:base-change-commutes-with-lusztig-induction}, the difficulty of simply applying Digne's above result is that the class functions $\BC_\ell(R_L^G(\rho))$ and $R_{L_{\F_{q^\ell}}}^{G_{\F_{q^\ell}}}(\BC_\ell(\rho))$ are extracted in rather different ways from $\widetilde\psi$, and a priori there could be considerable (and different) ``cancellations'' in both extractions. To show that this does not occur, we show that for large $\ell$ the decomposition of $\widetilde\psi$ into a $\Z$-linear combination of irreducible representations simply does not have large enough coefficients to permit such cancellation. The main technical ingredients involve bounding the size of Lusztig series (Lemma~\ref{lemma:lusztig-series-size-bound}) and the self-pairings of Lusztig induction for large $q$ (Lemma~\ref{lemma:mackey-formula-bound}).

A basic philosophy of this paper, which is useful for some other problems, is that for large $\ell$, base change from $\F_q$ to $\F_{q^\ell}$ ``creates more room'' in $G(\F_q)$ but otherwise does not affect the essential group-theoretic properties of $G$. For instance, such base change does not affect the set of rational conjugacy classes of maximal tori or the Weyl groups of these maximal tori (Lemma~\ref{lemma:tori-descend}) or individual geometric Lusztig series (Lemma~\ref{lemma:glauberman-lusztig-series-bijection}).

\subsection{Notation}

Throughout this paper, $G$ is a connected reductive group over $\F_q$, and $\ell$ is a prime number. The notation $\Fr_q$ is used to denote the Frobenius automorphism of $\ol\F_q$, as well as (in one instance) the Frobenius endomorphism of an algebraic $\F_q$-group.

\subsection{Acknowledgements}

This paper grew out of joint work with Tony Feng on modular functoriality in the local Langlands program. I thank Tony for a fruitful collaboration. I also thank Jay Taylor for helpful comments. The author did not employ AI tools in the preparation of this work.
This material is based upon work supported by the National Science Foundation under Award No.\ 2402231 and the European Research Council (ERC) under the European Union’s Horizon 2020 research and innovation programme (grant agreement no.\ 950326).

\section{Preliminaries on base change and Lusztig induction}

The \emph{Glauberman correspondence} \cite[Theorem 3]{Gla68} states (in a special case) that if $\Gamma$ is a finite group equipped with an automorphism $\alpha$ of order prime to $|\Gamma|$, then there is a natural bijection $\Irr(\Gamma^\alpha) \cong \Irr(\Gamma)^\alpha$ of sets of irreducible characters. This bijection is characterized by the property that if $\chi_0 \in \Irr(\Gamma^\alpha)$ corresponds to $\chi \in \Irr(\Gamma)^\alpha$, then there is some $\epsilon \in \{\pm 1\}$ such that
\[
\chi_0(\gamma) = \epsilon\widetilde\chi(\gamma \rtimes \alpha)
\]
for all $\gamma \in \Gamma^\alpha$, where $\widetilde\chi$ is the unique extension of $\chi$ to a character of $\Gamma \rtimes \langle\alpha\rangle$ such that $\widetilde\chi(1 \rtimes \alpha) \in \bZ$.

The following (presumably well-known) lemma shows that the Glauberman correspondence can be applied easily in our setting.

\begin{lemma}\label{lemma:size-of-finite-group-of-lie-type}
    Let $q$ be a prime power, let $X$ be a finite type $\F_q$-scheme, and let $\ell$ be a prime number not dividing $|X(\F_q)|$. Then $\ell$ does not divide $|X(\F_{q^\ell})|$.
\end{lemma}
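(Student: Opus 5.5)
The plan is to let the Frobenius act on $X(\F_{q^\ell})$ and count orbits. Put $\sigma = \Fr_q$, acting on the finite set $X(\F_{q^\ell}) = X(\ol\F_q)^{\Fr_q^\ell}$. Since $\sigma^\ell$ acts trivially there, this is an action of the cyclic group $\langle\sigma\rangle$ of order $\ell$.

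Because $\ell$ is prime, every orbit of this action has size $1$ or $\ell$. The fixed points are exactly the points of $X(\ol\F_q)$ fixed by $\Fr_q$, which form $X(\F_q)$. We need this identification on the level of $\ol\F_q$-points: a point of $X(\F_{q^\ell})$ is a morphism $\operatorname{Spec}\F_{q^\ell}\to X$, and it is $\sigma$-invariant precisely when it factors through $\operatorname{Spec}\F_q$, by Galois descent for points. It is cleanest to work with $X(\ol\F_q)$ and the geometric Frobenius bijection, whose fixed points are $X(\F_q)$ since $X$ is of finite type over $\F_q$. Summing orbit sizes then gives
\[
|X(\F_{q^\ell})| \equiv |X(\F_q)| \pmod{\ell}.
\]

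The conclusion is immediate from this congruence. If $\ell \nmid |X(\F_q)|$, then $|X(\F_{q^\ell})| \not\equiv 0 \pmod \ell$.

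I don't expect a real obstacle here. The only point needing care is that the fixed-point set of $\sigma$ on $X(\F_{q^\ell})$ is exactly $X(\F_q)$. This follows because $\F_q$ is the fixed field of $\Fr_q$ in $\F_{q^\ell}$, and because points of an affine open $\operatorname{Spec}A \subset X$ with values in a field $K$ are ring maps $A \to K$: such a map is $\sigma$-fixed iff its image lies in $K^\sigma$. Since $X$ is covered by finitely many affine opens, this local statement suffices. Finiteness of $X(\F_{q^\ell})$ follows from $X$ being of finite type over a finite field.
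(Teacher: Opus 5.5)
Your proof is correct, but it takes a different route from the paper.

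The paper invokes the Weil conjectures. Really it needs only rationality of the zeta function, i.e.\ $|X(\F_{q^n})| = \sum_i \alpha_i^n - \sum_j \beta_j^n$ for algebraic integers $\alpha_i, \beta_j$. It then uses the congruence $(\sum_i \alpha_i - \sum_j \beta_j)^\ell \equiv \sum_i \alpha_i^\ell - \sum_j \beta_j^\ell \pmod{\ell}$ in the ring of algebraic integers to carry $\ell$-adic unit-ness from $n = 1$ to $n = \ell$. Combined with Fermat's little theorem, this is the same congruence $|X(\F_{q^\ell})| \equiv |X(\F_q)| \pmod{\ell}$ that you obtain.

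Your orbit count under $\Gal(\F_{q^\ell}/\F_q)$ reaches it directly, with no cohomological or zeta-function input. It rests only on the fact that the $\sigma$-fixed points of $X(\F_{q^\ell})$ are exactly $X(\F_q)$, which you justify correctly via affine opens and $\F_{q^\ell}^{\sigma} = \F_q$. It also fits the setting where the lemma is used: $X = G$ and the Glauberman correspondence, where $G(\F_q) = G(\F_{q^\ell})^{\sigma}$.

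For this lemma the paper's route gains nothing beyond fitting the general picture of point counts as power sums. As its footnote notes, even the explicit order formulas for $G(\F_{q^n})$ would suffice.
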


\begin{proof}
    By the Weil conjectures, there exist algebraic integers $\alpha_1, \dots, \alpha_M, \beta_1, \dots, \beta_N$ such that for all $n \geq 1$ we have $|X(\F_{q^n})| = \sum_{i=1}^M \alpha_i^n - \sum_{j=1}^N \beta_j^n$. Since $\ell$ does not divide $|X(\F_q)|$ by hypothesis, the sum $\sum_{i=1}^M \alpha_i - \sum_{j=1}^N \beta_j$ is an $\ell$-adic unit. Since 
    \[
    \sum_{i=1}^M \alpha_i^\ell - \sum_{j=1}^N \beta_j^\ell \equiv \left( \sum_{i=1}^M \alpha_i - \sum_{j=1}^N \beta_j\right)^\ell \pmod{\ell},
    \]
    it follows that $\sum_{i=1}^M \alpha_i^\ell - \sum_{j=1}^N \beta_j^\ell$ is also an $\ell$-adic unit, i.e., $\ell$ does not divide $|X(\F_{q^\ell})|$.\footnote{In place of the Weil conjectures, one could also use the known explicit formulas for the order of $G(\F_{q^n})$.}
\end{proof}

Now suppose $\ell$ is a prime number not dividing the order of $G(\F_q)$, and let $\sigma$ denote the order $\ell$ automorphism of $G(\F_{q^\ell})$ induced by a generator of $\Gal(\F_{q^\ell}/\F_q)$. By Lemma~\ref{lemma:size-of-finite-group-of-lie-type}, the group $G(\F_{q^\ell})$ is also of order prime to $\ell$. We define a homomorphism
\[
    \BC_\ell'\colon K_0(\Rep(G(\F_q))) \to K_0(\Rep(G(\F_{q^\ell})\rtimes\langle\sigma\rangle))
\]
by ``uncorrecting the sign'' in the Glauberman correspondence; in other words, if $\chi$ is an irreducible character of $G(\F_q)$ then we define $\BC_\ell'(\chi)$ to be the unique class function on $G(\F_{q^\ell}) \rtimes\langle\sigma\rangle$ such that $\pm\BC_\ell'(\chi)$ is an irreducible character and
\[
\BC_\ell'(\chi)(g\rtimes\sigma) = \chi(g)
\]
for all $g \in G(\F_q)$. We let $\BC_\ell\colon K_0(\Rep(G(\F_q))) \to K_0(\Rep(G(\F_{q^\ell})))$ be the composition of $\BC_\ell'$ with the forgetful map. By the usual Glauberman correspondence, the map $\BC_\ell$ is an (injective) isometry with respect to the usual inner products on both sides.

The following lemma (partially) generalizes \cite[Corollaire 3.5]{Dig99} to arbitrary $q$. Let $L \subset G$ be a twisted Levi $\F_q$-subgroup, let $G_\ell = \Res_{\F_{q^\ell}/\F_q}(G_{\F_{q^\ell}})$, and similarly for $L_\ell$. Let $P \subset G_{\ol\F_q}$ be a parabolic $\ol\F_q$-subgroup with Levi factor $L_{\ol\F_q}$ and unipotent radical $U$, and let $P_1$ denote the parabolic $\ol\F_q$-subgroup of $(G_\ell)_{\ol\F_q} \cong (G_{\ol\F_q})^\ell$ with Levi factor $(L_\ell)_{\ol\F_q}$ and unipotent radical $(U_{\ol\F_q})^\ell$. We will freely use the generalization of Deligne--Lusztig theory to disconnected groups which is developed in \cite{DM94}.

\begin{lemma}\label{lemma:dig99-3.5}
    Let $\rho$ be an irreducible representation of $L(\F_q)$. Then
    \[
    R_{L_\ell\rtimes\langle\sigma\rangle, P_1 \rtimes\langle\sigma\rangle}^{G_\ell\rtimes \langle\sigma\rangle}(\BC_\ell'(\rho))(g \rtimes \sigma) = R_L^G(\rho)(g)
    \]
    for all $g \in G(\F_q)$.
\end{lemma}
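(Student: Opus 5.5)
We prove this with the character formula for Lusztig induction in the disconnected setting of \cite{DM94}, followed by a Shintani-type identification of the varieties that appear, in the spirit of \cite[Corollaire 3.5]{Dig99}. Write $\widetilde G = G_\ell\rtimes\langle\sigma\rangle$ and $\widetilde L = L_\ell\rtimes\langle\sigma\rangle$. Lusztig induction $R_{\widetilde L}^{\widetilde G}$ is defined through the $\ell$-adic cohomology of the variety
\[
Y_{\widetilde U}=\{x\in G_\ell(\ol\F_q): x^{-1}F(x)\in U_1\},\qquad U_1=(U_{\ol\F_q})^\ell .
\]
This variety carries commuting actions of $G_\ell(\F_q)\rtimes\langle\sigma\rangle$ and $L_\ell(\F_q)\rtimes\langle\sigma\rangle$; the element $\sigma$ acts through its permutation action on the $\ell$ factors. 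The value at $g\rtimes\sigma$ is
\[
\frac{1}{|L_\ell(\F_q)|}\sum_{m\in L_\ell(\F_q)} \mathcal L\bigl((g\rtimes\sigma, (m\rtimes\sigma)^{-1}), Y_{\widetilde U}\bigr)\,\BC'_\ell(\rho)(m\rtimes\sigma),
\]
where $\mathcal L$ is the Lefschetz number, computed with the disconnected-group conventions of \cite{DM94}.

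First I identify $G_\ell(\F_q)$ with $G(\F_{q^\ell})$. Under this identification, $F$ on $(G_{\ol\F_q})^\ell$ acts by cyclic permutation composed with $\Fr_q$ on one factor, and $\sigma$ acts by the Galois action. For $g\in G(\F_q)$ embedded diagonally, the composite $(g\rtimes\sigma)\circ F^{-1}$ (or a suitable combination of the two) becomes a Frobenius-type endomorphism. The key step is a Shintani/Lang-type descent. I would show that the fixed points of $(g\sigma,(m\sigma)^{-1})$ on $Y_{\widetilde U}$, or more precisely the Lefschetz numbers, agree with those of $(g,m_0^{-1})$ on the ordinary Deligne--Lusztig variety $Y_U$ for $G$. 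Here $m_0$ runs over $L(\F_q)$, and only pairs with $m$ in the $L_\ell(\F_q)$-orbit of such diagonal elements contribute. Concretely, the projection to the first coordinate should give an isomorphism between $Y_{\widetilde U}$, restricted to $\sigma F^{-1}$-stable data, and $Y_U$, compatibly with the actions. I would verify this by writing $x=(x_1,\dots,x_\ell)$ and solving the defining equations coordinate by coordinate: the conditions force $x_{i+1}$ to be determined by $x_i$ up to $U$, and this collapses to the condition $x_1^{-1}\Fr_q(x_1)\in U$.

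Next I use the Deligne--Lusztig character formula: the Lefschetz number of $(s u)$ reduces to its semisimple part acting on fixed points of the unipotent part. Together with the fact that $\ell$ is prime to $|G(\F_{q^\ell})|$, so that $\sigma$ generates a group of order prime to the group order and all relevant elements $g\rtimes\sigma$ have $\ell$-power order part $\sigma$ up to conjugacy, this reduces both sides to sums over $L(\F_q)$. The reduction uses the $\sigma$-twisted Lang map and Glauberman's lemma: the $G_\ell(\F_q)$-classes in the coset $G_\ell(\F_q)\sigma$ that meet $G(\F_q)\sigma$ correspond bijectively to $G(\F_q)$-classes. In this way the sum over $m\rtimes\sigma$ collapses to a sum over $m_0\in L(\F_q)$, with $\BC'_\ell(\rho)(m_0\rtimes\sigma)=\rho(m_0)$. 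The normalization $1/|L_\ell(\F_q)|$ becomes $1/|L(\F_q)|$, because the centralizer sizes match under Glauberman.

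I expect the main obstacle to be the geometric identification of Lefschetz numbers, and in particular sign conventions and the twisting that distinguishes Glauberman from Shintani descent, which the paper's footnote flags. I would first try to do it directly at the level of Lefschetz numbers, via the Deligne--Lusztig fixed-point formula $\mathcal L(su,Y)=\mathcal L(u,Y^s)$ applied with $s=g\sigma$ semisimple of order prime to $p$. I would avoid any appeal to good characteristic, since no Green function or Shintani comparison of unipotent characters is needed. The other steps use only $\BC'_\ell$'s defining property, and that property holds by construction.
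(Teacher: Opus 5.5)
Your plan is correct in substance but takes a different route from the paper. The key step is garbled as written, though; the version that works is the following. Use coordinates $G_\ell(\ol\F_q)=G(\ol\F_q)^\ell$ with $F(x)_i=\Fr_q(x_{i-1})$ and $\sigma(x)_i=x_{i+1}$, so that $U_1=U^\ell$ is $\sigma$-stable. Here Frobenius acts on \emph{every} factor before the shift; with $\Fr_q$ on only one factor, as you wrote, the rational points would be $G(\F_q)$. For $g\in G(\F_q)$ and $m_0\in L(\F_q)$, the map $x\mapsto g\sigma(x)m_0^{-1}$ on $Y_{U_1}$ is the product of two commuting automorphisms: $\sigma$, of order $\ell$ prime to $p$, and $h=(g,m_0^{-1})$, of order prime to $\ell$. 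Since $\langle\sigma h_{p'}\rangle=\langle\sigma\rangle\times\langle h_{p'}\rangle$, apply the Deligne--Lusztig fixed-point theorem $\mathcal L(su,Y)=\mathcal L(u,Y^s)$ with semisimple part $\sigma h_{p'}$. Two corrections here: $g\sigma$ itself is not semisimple unless $g$ is, and your first paraphrase of this formula swaps the roles of $s$ and $u$. The theorem gives
\[
\mathcal L\bigl((g\sigma,(m_0\sigma)^{-1}),Y_{U_1}\bigr)=\mathcal L\bigl((g,m_0^{-1}),Y_{U_1}^{\sigma}\bigr),
\]
and $Y_{U_1}^{\sigma}=\{(y,\dots,y): y^{-1}\Fr_q(y)\in U\}$ is the diagonal copy of $Y_U$, equivariantly for $G(\F_q)\times L(\F_q)$. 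So the locus you want is the $\sigma$-fixed locus, not ``$\sigma F^{-1}$-stable data''. On it $x_{i+1}=x_i$ exactly, not merely up to $U$, and no Shintani-type change of Frobenius is needed.

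Your summation step then goes through. By Schur--Zassenhaus, every element of $L(\F_{q^\ell})\sigma$ is $L(\F_{q^\ell})$-conjugate to some $m_0\sigma$ with $m_0\in L(\F_q)$, so nothing is discarded. Two such elements are conjugate iff the $m_0$ are $L(\F_q)$-conjugate, and $C_{L(\F_{q^\ell})}(m_0\sigma)=C_{L(\F_q)}(m_0)$. Hence the average over $L(\F_{q^\ell})$ becomes $|L(\F_q)|^{-1}\sum_{m_0\in L(\F_q)}\mathcal L((g,m_0^{-1}),Y_U)\rho(m_0)=R^G_{L,P}(\rho)(g)$.

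The paper never looks at $Y_{U_1}$ directly. Instead it:
\begin{itemize}
\item plugs into the Green-function character formula \cite[Proposition 2.6]{DM94};
\item identifies $((G_\ell)^{s\sigma})^\circ=Z_G(s)^\circ$;
\item uses Lang's theorem to write every $h$ with $s\sigma\in{}^h(L(\F_{q^\ell})\rtimes\langle\sigma\rangle)$ as $g_0x_0$ with $g_0\in G(\F_q)$ and $x_0\in L(\F_{q^\ell})$;
\item uses invariance of $\BC_\ell'(\rho)$ under twisted conjugation on the coset;
\item recognizes the result as the same formula for $R_L^G(\rho)(g)$.
\end{itemize}
Your route redoes by hand the fixed-point reduction that \cite{DM94} packages, and replaces the Lang's-theorem bookkeeping with the coprime-action description of classes in the coset. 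It needs no Green functions, and not even the notational appeal to \cite{GRV26}, since it directly produces $R^G_{L,P}$ for the $P$ with $P_1=P^\ell$. The paper's argument is shorter given \cite{DM94}. Yours is more elementary and shows more transparently that coprimality of $\ell$ with $|G(\F_{q^\ell})|$ is exactly what makes the comparison work.
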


\begin{proof}
    Let $\rho$ be an irreducible representation of $L(\F_q)$, and let $Q_L$ denote the Green function defined in the statement of \cite[Proposition 2.6]{DM94}. By \cite[Theorem 8.7.2]{GRV26}, the function $Q_L$ is independent of the choice of $P$, justifying the notation. The character formula of \cite[Proposition 2.6]{DM94} shows that if $g \in G(\F_q)$ has Jordan decomposition $g = su$, then
    \begin{align*}
    &R_{L_\ell \rtimes\langle\sigma\rangle, P_1 \rtimes \langle\sigma\rangle}^{G_\ell\rtimes\langle\sigma\rangle}(\BC_\ell'(\rho))(g \rtimes\sigma)\\
    &= \frac{1}{\ell \cdot|Z_G(s)^\circ(\F_q)|} \sum_{\substack{h = h_0 \rtimes \sigma^i \in G(\F_{q^\ell}) \rtimes \langle\sigma\rangle \\ s \rtimes \sigma \in {}^h(L(\F_{q^\ell})\rtimes\langle\sigma\rangle)}} \sum_{v \in Z_{^hL}(s)(\F_q)_{\mathrm{u}}} Q_{(({}^hL_\ell)^\sigma)^\circ}^{((G_\ell)^{s \rtimes \sigma})^\circ}(u, v^{-1}) \cdot \BC'_\ell(\rho)(h_0^{-1}sv\sigma(h_0) \rtimes \sigma), \addtocounter{equation}{1}\tag{\theequation} \label{eqn:char-formula-digne-michel}
    \end{align*}
    where the subscript $\mathrm{u}$ refers to the set of unipotent elements. Note that our choice of notation differs from that used in \cite[Proposition 2.6]{DM94}, where $su$ is used for the Jordan decomposition of $g \rtimes \sigma$; the justification for the discrepancy is that, by our hypothesis and Lemma~\ref{lemma:size-of-finite-group-of-lie-type}, the group $G(\F_{q^\ell})$ is of order not divisible by $\ell$, so $1 \rtimes \sigma$ and $g$ are both powers of $g \rtimes \sigma$ since $g$ commutes with $\sigma$. In particular, \eqref{eqn:char-formula-digne-michel} is equivalent to \cite[Proposition 2.6]{DM94}.

    We now aim to simplify the rather fearsome equation \eqref{eqn:char-formula-digne-michel}. Fix $g = su \in G(\F_q)$ as above. Observe first that
    \[
    ((G_\ell)^{s\rtimes\sigma})^\circ = Z_G(s)^\circ
    \]
    since $1 \rtimes \sigma$ and $s \rtimes 1$ are both powers of $s \rtimes \sigma$. If $s \rtimes\sigma \in {}^h(L(\F_{q^\ell}) \rtimes \langle\sigma\rangle)$ for some $h \in G(\F_{q^\ell}) \rtimes \langle\sigma\rangle$, say $h = h_0 \rtimes \sigma^i$, then in particular (again using that $1 \rtimes \sigma$ is a power of $s \rtimes \sigma$) there is some $x \in L(\F_{q^\ell})$ such that $h_0x\sigma(h_0)^{-1} = 1$, so $h_0^{-1}\sigma(h_0) \in L(\F_{q^\ell})$. By Lang's theorem, there is some $x_0 \in L(\ol\F_q)$ such that $h_0^{-1}\sigma(h_0) = x_0^{-1}\Fr_q(x_0)$ (where $\Fr_q$ is the Frobenius endomorphism of $G$), and thus $h_0x_0^{-1} \in G(\F_q)$. It follows that $x_0 \in L(\F_{q^\ell})$ and that $^hL_{\F_{q^\ell}}$ is the base change to $\F_{q^\ell}$ of the twisted Levi $\F_q$-subgroup $(h_0x_0^{-1})L (h_0x_0^{-1})^{-1}$ of $G$. These considerations show that (by setting $g_0 = h_0x_0^{-1}$ and $t_0 = x_0^{-1}$) \eqref{eqn:char-formula-digne-michel} is equal to
    \begin{equation}\label{eqn:char-formula-massaged}
    \frac{1}{|Z_G(s)^\circ(\F_q)| \cdot |L(\F_{q^\ell})|} \sum_{\substack{g_0 \in G(\F_q) \\ t_0 \in L(\F_{q^\ell}) \\ s \in {}^{g_0}L(\F_q)}} \sum_{v \in Z_{^{g_0}L}(s)(\F_q)_{\mathrm{u}}} Q_{^{g_0}L}^{Z_G(s)^\circ}(u, v^{-1}) \cdot \BC'_\ell(\rho)(t_0g_0^{-1}s g_0\sigma(t_0^{-1}) \rtimes \sigma).
    \end{equation}
    Observe now that
    \[
    \BC'_\ell(\rho)(t_0g_0^{-1}s g_0\sigma(t_0^{-1}) \rtimes \sigma) = \rho(g_0^{-1}sg_0)
    \]
    since $\BC_\ell(\rho)$ is $\sigma$-stable and $\rho = \BC_\ell(\rho)|_{L(\F_q) \rtimes \{\sigma\}}$. Thus \eqref{eqn:char-formula-massaged} is equal to
    \[
    \frac{1}{|Z_G(s)^\circ(\F_q)|} \sum_{\substack{g_0 \in G(\F_q) \\ s \in {}^{g_0}L(\F_q)}} \sum_{v \in Z_{^{g_0}L}(s)(\F_q)_{\mathrm{u}}} Q_{^{g_0}L}^{Z_G(s)^\circ}(u, v^{-1}) \cdot \rho(g_0sg_0^{-1}).
    \]
    By \cite[Proposition 2.6]{DM94} again, this is equal to $R_L^G(\rho)(g)$.
\end{proof}

\begin{lemma}\label{lemma:psi-tilde}
    Let $\rho$ be an irreducible representation of $L(\F_q)$. Then the class function $R_{L_{\F_{q^\ell}}}^{G_{\F_{q^\ell}}}(\BC_\ell(\rho))$ on $G(\F_{q^\ell})$ admits an extension $\widetilde\psi$ to a class function on $G(\F_{q^\ell}) \rtimes \langle\sigma\rangle$ such that
    \[
    \widetilde\psi(g \rtimes \sigma) = R_L^G(\rho)(g)
    \]
    for all $g \in G(\F_q)$.
\end{lemma}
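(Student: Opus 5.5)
The natural candidate is $\widetilde\psi$ restricted from the disconnected Lusztig induction of Lemma~\ref{lemma:dig99-3.5}. I would set
\[
\widetilde\psi_0 := R_{L_\ell\rtimes\langle\sigma\rangle, P_1 \rtimes\langle\sigma\rangle}^{G_\ell\rtimes \langle\sigma\rangle}(\BC_\ell'(\rho)),
\]
a virtual character of $G_\ell(\F_q)\rtimes\langle\sigma\rangle = G(\F_{q^\ell})\rtimes\langle\sigma\rangle$. By Lemma~\ref{lemma:dig99-3.5}, it already has the required values on $G(\F_q)\rtimes\{\sigma\}$. Here $\sigma$ acts on $G_\ell(\F_q)$ as the Galois generator. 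What remains is to identify the restriction of $\widetilde\psi_0$ to the identity component $G(\F_{q^\ell})$ with $R_{L_{\F_{q^\ell}}}^{G_{\F_{q^\ell}}}(\BC_\ell(\rho))$. Then $\widetilde\psi := \widetilde\psi_0$ works.

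For the restriction, I would use the compatibility of the Digne--Michel disconnected induction with restriction to identity components. The variety defining $R^{G_\ell\rtimes\langle\sigma\rangle}_{L_\ell\rtimes\langle\sigma\rangle}$ is $\{g\in G_\ell\rtimes\langle\sigma\rangle : g^{-1}F(g)\in U_1\}$, with $U_1=(U)^\ell$ the unipotent radical of $P_1$. This variety is the disjoint union over the cosets $G_\ell\sigma^i$. Since $L_\ell\rtimes\langle\sigma\rangle$ meets every component, the induced module, as a $G_\ell(\F_q)$-module, is $R_{L_\ell,P_1}^{G_\ell}$ applied to the restriction of $\BC'_\ell(\rho)$ to $L_\ell(\F_q)$. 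This is a Mackey-type restriction statement, in the spirit of the fact that for $H^\circ\subset H$ and $H=H^\circ M$ the restriction of $R_M^H$ to $H^\circ$ is $R_{M^\circ}^{H^\circ}\circ\Res$; I would cite it from \cite{DM94}. The restriction of $\BC'_\ell(\rho)$ to $L(\F_{q^\ell})$ is $\BC_\ell(\rho)$ by definition.

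Next, I would identify $R_{L_\ell,P_1}^{G_\ell}$ on $G_\ell(\F_q)=G(\F_{q^\ell})$ with $R^{G_{\F_{q^\ell}}}_{L_{\F_{q^\ell}},P}$. Under $(G_\ell)_{\ol\F_q}\cong G_{\ol\F_q}^\ell$, the Frobenius acts by cyclically permuting the factors twisted by $\Fr_q$. The Deligne--Lusztig variety for $(G_\ell, P_1)$ projects isomorphically, via the first coordinate, onto the one for $G_{\F_{q^\ell}}$ with Frobenius $\Fr_q^\ell$ and parabolic $P$, equivariantly for $G(\F_{q^\ell})\times L(\F_{q^\ell})$. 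This is the standard Weil-restriction compatibility: if $g=(g_1,\dots,g_\ell)$ satisfies $g^{-1}F(g)\in U^\ell$, then the $g_i$ are determined by $g_1$ up to $U$-cosets, and $g_1^{-1}F^\ell(g_1)\in U\cdot{}^{F}U\cdots$. This forces some care, since $P$ need not be $F$-stable. Here I would instead invoke \cite[Theorem 8.7.2]{GRV26}: independence of the parabolic lets me replace $P_1$ by any parabolic with Levi $L_\ell$, such as $P\times {}^{F}P\times\cdots$, for which the projection is visibly an isomorphism of varieties with the same cohomology.

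I expect the main obstacle to be this second step, together with checking that the restriction to $G_\ell$ really gives a single Lusztig induction rather than a sum over the $\ell$ components. The first step should be routine from \cite{DM94}. For the identification I would first try the explicit parabolic $P\times{}^{F}P\times\cdots\times{}^{F^{\ell-1}}P$ and GRV independence. I would also double-check via character formulas that the Green functions match, using \eqref{eqn:char-formula-digne-michel} at $\sigma^0$.
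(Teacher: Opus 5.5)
Your proposal is correct and is essentially the paper's proof: you use the same $\widetilde\psi$ (Digne--Michel induction along $P_1\rtimes\langle\sigma\rangle$, with $\sigma$-coset values from Lemma~\ref{lemma:dig99-3.5}), restricted to $G(\F_{q^\ell})$ via \cite[Corollaire 2.4]{DM94}, then \cite[Theorem 8.7.2]{GRV26} to replace $P_1$ by $P_2 = P\times{}^{\Fr_q}P\times\cdots\times{}^{\Fr_q^{\ell-1}}P$, and finally the Weil-restriction identification, which the paper cites as \cite[Corollaire 3.2]{Dig99} and you propose to verify by hand. Your ordering is the right one: you restrict to the identity component while still using the $\sigma$-stable $P_1$, and only then switch to $P_2$, which is not $\sigma$-stable in general; the paper's argument should be read the same way.
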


\begin{proof}
    Let $P_2 \subset (G_\ell)_{\ol\F_q}$ be the parabolic $\ol\F_q$-subgroup with Levi factor $(L_\ell)_{\ol\F_q}$ and unipotent radical $U \times {}^{\Fr_q}U \times \cdots \times {}^{\Fr_q^{\ell-1}}U$. By \cite[Corollaire 3.2]{Dig99}, we have
    \[
    R_{L_\ell, P_2}^{G_\ell}(\rho) = R_{L_{\F_{q^\ell}}}^{G_{\F_{q^\ell}}}(\rho).
    \]
    By \cite[Theorem 8.7.2]{GRV26}, we have $R_{L_\ell, P_2}^{G_\ell} = R_{L_\ell, P_1}^{G_\ell}$. By \cite[Corollaire 2.4]{DM94}, the image of $R_{L_\ell\rtimes\langle\sigma\rangle, P_2\rtimes\langle\sigma\rangle}^{G_\ell\rtimes\langle\sigma\rangle}(\rho)$ in $K_0(\Rep(G(\F_{q^\ell})))$ is equal to $R_{L_\ell,P_2}^{G_\ell}(\rho)$, so the result follows from Lemma~\ref{lemma:dig99-3.5}.
\end{proof}

\begin{remark}
    The proof of Lemma~\ref{lemma:psi-tilde} is the only point at which we use the difficult \cite[Theorem 8.7.2]{GRV26}, and here we use it only in a very special situation (following \cite{Dig99}). However, it is not clear whether this situation is any easier than the general case.
\end{remark}

We record the following lemma here for want of a more natural place to put it.

\begin{lemma}\label{lemma:tori-descend}
    Suppose $\ell > \rk G + 1$.
    \begin{enumerate}
        \item If $T$ is a maximal $\F_{q^\ell}$-torus of $G_{\F_{q^\ell}}$, then there exists a maximal $\F_q$-torus $T_0$ of $G$ such that $(T_0)_{\F_{q^\ell}}$ is $G(\F_{q^\ell})$-conjugate to $T$.
        \item If $T_0$ and $T_1$ are maximal $\F_q$-tori of $G$ and $\Transp_G(T_0, T_1)$ denotes the subscheme of $G$ parameterizing elements $g$ with $gT_0g^{-1} = T_1$, then
        \[
        \Transp_G(T_0, T_1)(\F_q)/T_1(\F_q) = \Transp_G(T_0, T_1)(\F_{q^\ell})/T_1(\F_{q^\ell}).
        \]
    \end{enumerate}
\end{lemma}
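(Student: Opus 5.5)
Both parts reduce to Galois cohomology of finite Weyl-type groups, using that $\ell$ exceeds every prime dividing their orders. The governing fact is that $\rk G + 1$ bounds the primes dividing $|W|$, where $W$ is the Weyl group of $G_{\ol\F_q}$. Indeed, $W$ embeds in $\mathrm{GL}_r(\Z)$ with $r = \rk G$, and any element of prime order $p$ in $\mathrm{GL}_r(\Z)$ forces $p-1 \le r$, because the cyclotomic polynomial $\Phi_p$ must divide its characteristic polynomial. So the hypothesis $\ell > \rk G+1$ gives $\ell \nmid |W|$. The same bound applies to the finite automorphism group of $W$ that actually enters, namely $W \rtimes \langle \Fr_q \rangle$ acting on $X^*(T)$: the Frobenius image has finite order and lives in $\mathrm{GL}_r(\Z)$ as well.

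For (1), I use the standard classification of $G(\F_{q^\ell})$-conjugacy classes of maximal $\F_{q^\ell}$-tori of $G_{\F_{q^\ell}}$. Fix a maximal $\F_q$-torus $T_1$ with Weyl group $W = N(T_1)/T_1$, on which $\Fr_q$ acts as an automorphism $\phi$. Classes of maximal $\F_q$-tori correspond to $\phi$-conjugacy classes in $W$, i.e. $H^1(\langle\phi\rangle, W)$. Classes of maximal $\F_{q^\ell}$-tori correspond to $\phi^\ell$-conjugacy classes in $W$. The base change map sends the class of $w$ to the class of the norm $N(w) = w\,\phi(w)\cdots\phi^{\ell-1}(w)$.

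We must show this norm map is surjective. Equivalently, in the semidirect product $W \rtimes \langle\phi\rangle$, I want to show that every element of the coset $W\phi^\ell$ is $W$-conjugate to an $\ell$-th power of an element of $W\phi$. Let $m$ be the order of $\phi$ on $W$, and work inside the finite group $\widetilde W = W \rtimes \Z/m$ (or $\Z/m\ell'$ as needed). Since $\ell$ is prime to $|W|$ and (for large $\ell$) to the relevant orders, raising to the $\ell$-th power is a bijection on each cyclic subgroup of order prime to $\ell$. Given $x = w\phi^\ell$, take $y = x^{k}$ where $k\ell \equiv 1$ modulo the order of $x$. Then $y^\ell = x$. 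It remains to check that $y$ lies in the coset $W\phi$, i.e. that $\ell k \equiv 1$ also modulo $m$ in the exponent of $\phi$. This works if $\gcd(\ell, m)=1$, which holds since $m$ divides the order of a finite subgroup of $\mathrm{GL}_r(\Z)$ and so is prime to $\ell$.

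This argument shows that the map on $\phi$-classes to $\phi^\ell$-classes is surjective, and in fact bijective: the inverse is $x \mapsto x^k$, which respects conjugacy. Injectivity will feed into (2).

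For (2), if $\Transp_G(T_0,T_1)(\F_q)$ is empty, I first show the $\F_{q^\ell}$-points are also empty. This follows from the injectivity above, since $T_0$ and $T_1$ being $G(\F_{q^\ell})$-conjugate forces them to be $G(\F_q)$-conjugate. Otherwise I conjugate to the case $T_0 = T_1 = T$. Then $\Transp(T,T)(F)/T(F) = N(T)(F)/T(F)$, and by Lang's theorem for $T$ this equals $W^{\phi}$ over $\F_q$ and $W^{\phi^\ell}$ over $\F_{q^\ell}$, where $\phi$ now denotes $\Fr_q$ acting on $W$ twisted by the class of $T$. In the finite group generated by $W$ and $\phi$, raising to the $\ell$-th power is a bijection, since $\ell$ is prime to its order. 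Hence $\phi$ is a power of $\phi^\ell$, so the centralizers coincide: $W^{\phi^\ell} = W^{\phi}$. The Lang's theorem identification here relies on $H^1(\F_q, T)$ vanishing, which holds because $T$ is connected.

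The main obstacle I anticipate is bookkeeping: showing that the order of the twisted Frobenius automorphism of $W$ (including the diagram automorphism part) is prime to $\ell$. I will handle this by embedding everything in the finite subgroup of $\mathrm{GL}(X^*(T)) \cong \mathrm{GL}_r(\Z)$ generated by $W$ and the $q$-normalized Frobenius. The bound $p \le r+1$ then applies uniformly to every prime dividing its order.
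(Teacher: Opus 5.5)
Your proposal is correct and follows essentially the same route as the paper. Both arguments parametrize rational classes of maximal tori by $\phi$-twisted conjugacy classes in $W$ and identify base change with the norm map $w \mapsto w\phi(w)\cdots\phi^{\ell-1}(w)$. Both then use that $\ell$ is prime to $|W|$ and to the order of the Frobenius action to get bijectivity for (1) and $W^{\phi} = W^{\phi^\ell}$ for (2).

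You also supply details the paper leaves implicit. Your $\Phi_p$-divisibility bound in $\mathrm{GL}_r(\Z)$ replaces the paper's appeal to the classification. Your explicit inverse $x \mapsto x^k$ on $W \rtimes \langle\phi\rangle$ spells out why the norm map is bijective on twisted classes.
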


\begin{proof}
    If $S \subset G$ is a maximal $\F_q$-torus with Weyl group $W = (N_G(S)/S)(\ol\F_q)$, then the $G(\F_q)$-conjugacy classes of maximal $\F_q$-tori of $G$ correspond to $\Fr_q$-twisted conjugacy classes in $W$ in the following way: if $T_0$ is a maximal $\F_q$-torus of $G$, then by conjugacy of maximal $\ol\F_q$-tori of $G_{\ol\F_q}$, there is some $g \in G(\ol\F_q)$ such that $g(T_0)_{\ol\F_q}g^{-1} = S_{\ol\F_q}$. The element $g \cdot \Fr_q(g)^{-1}$ lies in $N_G(S)(\ol\F_q)$, and the $\Fr_q$-twisted conjugacy class of its image in $W$ is independent of the choice of $g$.
    
    Under the above identification, the base change map $T_0 \mapsto (T_0)_{\F_{q^\ell}}$ corresponds to the map $f\colon W \to W$ given by $f(w) = \prod_{i=0}^{\ell-1} \Fr_q^i(w)$, which intertwines $\Fr_q$-twisted conjugacy with $\Fr_q^\ell$-twisted conjugacy. Since $\ell > \dim S + 1$, the action of $\Fr_q$\footnote{In this context, $\Fr_q$ is the $q$-Frobenius automorphism of $\ol\F_q$, and in particular its action does \emph{not} agree with the action induced by the Frobenius endomorphism of $G$ (which we have also denoted $\Fr_q$ above).} on $X^*(S_{\ol\F_q})$ is of order prime to $\ell$, and thus the same is true of its action on $W$. Moreover, the classification of reductive groups and the assumption $\ell > \rk G + 1$ shows that $\ell$ does not divide the order of $W$, so $f$ is bijective, proving (1). Similar reasoning shows that $(N_G(S)/S)(\F_q) = (N_G(S)/S)(\F_{q^\ell})$.

    For (2), note that if $\Transp_G(T_0, T_1)(\F_q)$ is empty, then $\Transp_G(T_0, T_1)(\F_{q^\ell})$ is also empty, since the previous paragraph shows that base extension induces a bijection from the set of $G(\F_q)$-conjugacy classes of maximal $\F_q$-tori in $G$ to the set of $G(\F_{q^\ell})$-conjugacy classes of maximal $\F_{q^\ell}$-tori in $G_{\F_{q^\ell}}$. Otherwise, we have
    \[
    \Transp_G(T_0, T_1)(\F_q)/T_1(\F_q) = g \cdot N_G(T_1)(\F_q)/T_1(\F_q)
    \]
    for some $g \in \Transp_G(T_0, T_1)(\F_q)$, and the last sentence of the previous paragraph yields the claim.
\end{proof}

\section{Lusztig series}

We recall here the notion of \emph{geometric Lusztig series}. Let $\ell$ be a prime number not dividing $|G(\F_q)|$. If $\chi$ is an irreducible representation of $G(\F_q)$, then by \cite[Corollary 7.7]{DL76} there is a maximal $\F_q$-torus $T \subset G$ and a character $\theta\colon T(\F_q) \to \ol\Q_\ell^\times$ such that $\chi$ has nonzero pairing with $R_T^G(\theta)$. The \emph{geometric Lusztig series} $\cE(G, (T, \theta))$ corresponding to $(T, \theta)$ is the set of irreducible representations of $G(\F_q)$ which have nonzero pairing with $R_{T'}^G(\theta')$ for some pair $(T',\theta')$ which is geometrically conjugate to $(T,\theta)$ in the sense of \cite[Definition 5.5]{DL76}. By \cite[Corollary 6.3]{DL76}, the geometric Lusztig series form a partition of the set of irreducible representations of $G(\F_q)$.

\begin{lemma}\label{lemma:geometric-conjugacy-split}
    Let $(T, \theta)$ and $(T', \theta')$ be torus-character pairs in $G$. If $n$ is a positive integer such that $T_{\F_{q^n}}$ and $T'_{\F_{q^n}}$ are split, then $(T, \theta)$ is geometrically conjugate to $(T', \theta')$ if and only if $(T_{\F_{q^n}}, \theta \circ N_{\F_{q^n}/\F_q})$ is $G(\F_{q^n})$-conjugate to $(T'_{\F_{q^n}}, \theta' \circ N_{\F_{q^n}/\F_q})$.
\end{lemma}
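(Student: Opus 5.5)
The plan is to unwind \cite[Definition 5.5]{DL76} and reduce everything to the single level $\F_{q^n}$. By that definition, $(T,\theta)$ and $(T',\theta')$ are geometrically conjugate if there is some $g \in G(\ol\F_q)$ with $gTg^{-1} = T'$, together with some $m$ such that $g \in G(\F_{q^m})$ and
\[
\theta \circ N_{\F_{q^m}/\F_q} = \theta' \circ N_{\F_{q^m}/\F_q} \circ \ad g
\]
on $T(\F_{q^m})$. The ``if'' direction is then immediate: a $G(\F_{q^n})$-conjugacy of the pairs $(T_{\F_{q^n}}, \theta \circ N_{\F_{q^n}/\F_q})$ and $(T'_{\F_{q^n}}, \theta' \circ N_{\F_{q^n}/\F_q})$ is precisely this condition with $m = n$.

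For the ``only if'' direction, I would proceed in three steps.

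\emph{Step 1: we may take $m$ divisible by $n$.} If the condition holds at level $m$, it also holds at level $mk$ for every $k \geq 1$. Indeed, $N_{\F_{q^{mk}}/\F_q} = N_{\F_{q^m}/\F_q} \circ N_{\F_{q^{mk}}/\F_{q^m}}$, and $\ad g$ commutes with $N_{\F_{q^{mk}}/\F_{q^m}}$ because $g$ is $\F_{q^m}$-rational. Replacing $m$ by $mn$, we may therefore assume $n \mid m$.

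\emph{Step 2: replace $g$ by an $\F_{q^n}$-rational element inducing the same map $T \to T'$.} This is the step I expect to be the main point. Since $T_{\F_{q^n}}$ and $T'_{\F_{q^n}}$ are both split maximal tori of $G_{\F_{q^n}}$, they are $G(\F_{q^n})$-conjugate. Hence $\Transp_G(T,T')$ has an $\F_{q^n}$-point $g_0$, and
\[
\Transp_G(T,T') = g_0 N_G(T).
\]
Because $T_{\F_{q^n}}$ is split, the Weyl group scheme $N_G(T)/T$ is constant over $\F_{q^n}$, so every coset in $N_G(T)(\ol\F_q)/T(\ol\F_q)$ has an $\F_{q^n}$-rational representative. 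Consequently we can write $g = h t$ with $h \in \Transp_G(T,T')(\F_{q^n})$ and $t \in T(\ol\F_q)$. Conjugation by $t$ is trivial on $T$, so $\ad g = \ad h$ as maps $T \to T'$.

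\emph{Step 3: descend the character identity from level $m$ to level $n$.} Since $n \mid m$ we have $N_{\F_{q^m}/\F_q} = N_{\F_{q^n}/\F_q} \circ N_{\F_{q^m}/\F_{q^n}}$, and $\ad h$ commutes with $N_{\F_{q^m}/\F_{q^n}}$ because $h$ is $\F_{q^n}$-rational. The level-$m$ identity therefore reads
\[
\bigl(\theta \circ N_{\F_{q^n}/\F_q}\bigr) \circ N_{\F_{q^m}/\F_{q^n}} = \bigl(\theta' \circ N_{\F_{q^n}/\F_q} \circ \ad h\bigr) \circ N_{\F_{q^m}/\F_{q^n}}.
\]
The norm $N_{\F_{q^m}/\F_{q^n}}\colon T(\F_{q^m}) \to T(\F_{q^n})$ is surjective by Lang's theorem applied to the torus $T_{\F_{q^n}}$, so we may cancel it. This gives $\theta \circ N_{\F_{q^n}/\F_q} = \theta' \circ N_{\F_{q^n}/\F_q} \circ \ad h$ on $T(\F_{q^n})$. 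Since $h \in G(\F_{q^n})$ and $hTh^{-1} = T'$, this is exactly the required $G(\F_{q^n})$-conjugacy of the two pairs.
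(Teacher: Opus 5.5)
Your proof is correct and is essentially the paper's argument. Your Step 2 is the paper's observation that $\Transp_G(T,T')/T$ is a torsor under the Weyl group $N_G(T)/T$, which is constant over $\F_{q^n}$, so that every element conjugating $T$ to $T'$ is $\F_{q^n}$-rational up to right multiplication by $T(\ol\F_q)$; the lift from $W(\F_{q^n})$ to $N_G(T)(\F_{q^n})$ is Lang's theorem for $T$. Your Steps 1 and 3 (passing to a level divisible by $n$, then cancelling the surjective norm $N_{\F_{q^m}/\F_{q^n}}$) make explicit the descent of the character identity that the paper compresses into ``the claim follows.''
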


\begin{proof}
    If $\Transp_G(T, T')$ denotes the scheme parameterizing local sections $g$ of $G$ conjugating $T$ to $T'$ and $W = N_G(T)/T$ is the Weyl group of $T$, then $\Transp_G(T, T')/T$ is a $W$-torsor. But $\Transp_G(T, T')(\F_{q^n})$ is nonempty by hypothesis, so $\Transp_G(T, T')/T$ is trivialized over $\F_{q^n}$. Since $W(\F_{q^n}) = W(\ol\F_q)$, the claim follows.
\end{proof}

In the remainder of this section, let $W_0$ be the absolute Weyl group of $G$.

\begin{lemma}\label{lemma:lusztig-series-size-bound}
    Let $T \subset G$ be a maximal $\F_q$-torus, and let $\theta\colon T(\F_q) \to \ol\Q_\ell^\times$ be a character. The geometric Lusztig series $\cE(G, (T, \theta))$ is of size at most $|W_0|^2$.
\end{lemma}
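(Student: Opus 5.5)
The plan is to bound $|\cE(G,(T,\theta))|$ by a norm computation with the virtual characters $R_{T'}^G(\theta')$, using the orthogonality relations of Deligne--Lusztig (case (1) of the Mackey formula, \cite[Theorem 6.8]{DL76}).

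First, I would bound the number of $G(\F_q)$-conjugacy classes of pairs $(T',\theta')$ geometrically conjugate to $(T,\theta)$ by $|W_0|$. Pick $n$ such that all maximal $\F_q$-tori split over $\F_{q^n}$; any fixed $n$ divisible by the orders of all elements of $W_0 \rtimes \langle \Fr_q\rangle$ works. By Lemma~\ref{lemma:geometric-conjugacy-split}, geometric conjugacy becomes $G(\F_{q^n})$-conjugacy of $(T'_{\F_{q^n}}, \theta'\circ N)$. The $G(\F_q)$-classes of maximal $\F_q$-tori correspond to $\Fr_q$-twisted classes $[w]$ in $W_0$. The pairs lying over a given class are determined by the character $\theta'$, which must be a $W$-translate of $\theta$ in the split picture, compatible with the twisted Frobenius $w\Fr_q$. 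Parametrizing in the standard way (as in \cite[\S 5]{DL76}), the geometric class corresponds to a $W_0$-orbit $O$ of characters of $Y(S)\otimes$ (roots of unity), and the rational pairs correspond to pairs $(w,\lambda)$ with $\lambda\in O$ and $\Fr_q(\lambda)=w\lambda$, modulo $W_0$. Given $\lambda$, the number of valid $w$ is at most $|W_0|$, so the number of classes is at most $|O|\cdot|W_0|/|W_0| \le |W_0|$.

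Next, set $\Psi=\sum_{(T',\theta')}R_{T'}^G(\theta')$, summing over representatives of these rational classes. Every $\chi\in\cE(G,(T,\theta))$ appears in some $R_{T'}^G(\theta')$. I would rather use the uniform projection: by \cite[Proposition 7.?]{DL76}, each such $\chi$ has nonzero pairing with $R_{T'}^G(\theta')$ for some $(T',\theta')$ in the series. The difficulty is that the coefficients in $\Psi$ could cancel. To avoid this, I would bound directly:
\[
|\cE(G,(T,\theta))| \le \sum_{(T',\theta')} \#\{\chi : \langle \chi, R_{T'}^G(\theta')\rangle\neq 0\} \le \sum_{(T',\theta')} \langle R_{T'}^G(\theta'),R_{T'}^G(\theta')\rangle .
\]
The second inequality holds because the number of constituents of a virtual character is at most the sum of the squares of its coefficients.

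Finally, by \cite[Theorem 6.8]{DL76}, the self-pairing $\langle R_{T'}^G(\theta'),R_{T'}^G(\theta')\rangle$ equals $|\{w\in W(T')(\F_q): {}^w\theta'=\theta'\}|\le |W_0|$. Combined with the count of at most $|W_0|$ classes, this gives the bound $|W_0|^2$.

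The main obstacle is the first step: counting the rational classes inside one geometric class cleanly, with the twisted Frobenius action, so that the result is at most $|W_0|$. If that parametrization becomes messy, I would fall back on a cruder count. The number of $G(\F_q)$-classes of tori is at most $|W_0|$ (the twisted classes in $W_0$), and for a fixed $T'$, the characters $\theta'$ geometrically conjugate to $\theta$ satisfy $\theta'\circ N=w(\theta\circ N)$ for some $w\in W_0$. Since the norm is surjective on $\F_{q^n}$-points of tori, this gives at most $|W_0|$ choices of $\theta'$ modulo $W(T')(\F_q)$, up to the stabilizer. I would then balance this against the self-pairing via an orbit-stabilizer argument, so that the product of the count and the self-pairing still stays within $|W_0|^2$ overall.
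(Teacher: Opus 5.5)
Your strategy is sound but takes a different route from the paper's. The paper multiplies the number of $G(\F_q)$-classes of maximal $\F_q$-tori (at most $|W_0|$) by the bound $|W_0|$ on a single self-pairing, which tacitly assumes that each torus class contributes only one $R_{T'}^G(\theta')$. You instead count $G(\F_q)$-classes of \emph{pairs} in the geometric class, and your concern is legitimate. A single torus class can carry several characters that are geometrically but not rationally conjugate. For example, take $\mathrm{SL}_4$ over $\F_q$ with $q$ odd, and the geometric class of the image of $\mathrm{diag}(1,1,-1,-1)$ in $\mathrm{PGL}_4$. There the five rational pairs lie on only four classes of tori, and two of them sit on the tori of double-transposition type. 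Steps 2 and 3 of your plan are fine. The placeholder ``Proposition 7.?'' is unnecessary, since pairing nontrivially with some $R_{T'}^G(\theta')$ in the class is the definition of $\cE(G,(T,\theta))$.

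The gap is in Step 1. The bound $|O|\cdot|W_0|/|W_0|$ divides the number of pairs $(w,\lambda)$ by $|W_0|$. That counts orbits only for a free action, and the twisted $W_0$-action is not free. For $\theta=1$ the orbit $O$ is a single point, while the rational pairs $(T',1)$ correspond to all $\Fr_q$-twisted classes of $W_0$. So your intermediate bound $|O|$ on the number of classes is false in general.

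The correct count goes as follows. Since $\Fr_q(O)=O$, for each $\lambda\in O$ the admissible $w$ (those with $\Fr_q(\lambda)=w\lambda$) form a nonempty coset of $\mathrm{Stab}_{W_0}(\lambda)$. Hence there are exactly $|O|\cdot|\mathrm{Stab}_{W_0}(\lambda)|=|W_0|$ admissible pairs, and therefore at most $|W_0|$ orbits. Steps 2--3 then give $|W_0|^2$ as claimed.

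Your fallback, once made precise, is the matching repair of the paper's route. On a fixed torus there are at most $|O|$ admissible $\lambda$. Each has self-pairing at most $|\mathrm{Stab}_{W_0}(\lambda)|=|W_0|/|O|$. So each of the at most $|W_0|$ torus classes contributes at most $|W_0|$.
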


\begin{proof}
    Let $W = (N_{G}(T)/T)(\F_q)$ be the Weyl group of $(G, T)$. By \cite[Theorem 6.8]{DL76}, we have
    \[
    \langle R_{T}^{G}(\theta), R_{T}^{G}(\theta)\rangle_{G(\F_q)} = |\{w \in W\co {}^w\theta = \theta\}| \leq |W_0|.
    \]
    Note that the set of $G(\F_q)$-conjugacy classes of maximal $\F_q$-tori of $G$ is parameterized by the set of $\Gal(\ol\F_q/\F_q)$-twisted conjugacy classes in $W$, of which there at most $|W_0|$, so the lemma follows.
\end{proof}

\begin{lemma}\label{lemma:glauberman-lusztig-series-bijection}
    Let $T \subset G$ be a maximal $\F_q$-torus, let $\ell > |W_0|^2$ be a prime number not dividing $|G(\F_q)|$, let $\theta\colon T(\F_q) \to \ol\Q_\ell^\times$ be a character, and let $\theta_\ell = \BC_\ell(\theta)$. The Glauberman correspondence induces a bijection
    \[
    \cE(G, (T,\theta)) \xrightarrow[]{\cong} \cE(G_{\F_{q^\ell}}, (T_{\F_{q^\ell}},\theta_\ell)).
    \]
\end{lemma}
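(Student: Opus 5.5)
We aim to show that $\BC_\ell$ carries $\cE(G,(T,\theta))$ into $\cE(G_{\F_{q^\ell}},(T_{\F_{q^\ell}},\theta_\ell))$, and that every element of the target lies in the image. The strategy is to test membership in a series by pairing with Deligne--Lusztig characters $R_{T'}^G(\theta')$. This requires understanding $\BC_\ell(R_{T'}^G(\theta'))$ and how geometric conjugacy behaves under base change.

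\emph{Step 1: base change commutes with $R_{T'}^G$ for tori.} For a maximal torus the Mackey formula is known (case (1)), and Lemma~\ref{lemma:psi-tilde} applies with $L=T'$. So $R_{T'_{\F_{q^\ell}}}^{G_{\F_{q^\ell}}}(\BC_\ell(\theta'))$ extends to a class function $\widetilde\psi$ on $G(\F_{q^\ell})\rtimes\langle\sigma\rangle$ whose values on $G(\F_q)\rtimes\sigma$ are those of $R_{T'}^G(\theta')$.

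We write $\widetilde\psi=\sum_\chi a_\chi\widetilde\chi$, summing over extensions of $\sigma$-stable irreducibles together with the non-stable orbit sums. By the Deligne--Lusztig inner product formula together with Lemma~\ref{lemma:tori-descend}(2), we have
\[
\langle R_{T'_{\F_{q^\ell}}}(\theta'_\ell),R_{T'_{\F_{q^\ell}}}(\theta'_\ell)\rangle=\langle R_{T'}(\theta'),R_{T'}(\theta')\rangle\le|W_0|.
\]
The key point is that the Weyl group stabilizers agree after base change, since $\theta'\mapsto\theta'\circ N$ is injective on characters when $\ell\nmid|T'(\F_q)|$.

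Restricting $\widetilde\psi$ to the coset $G(\F_q)\rtimes\sigma$ and using Glauberman orthogonality, the $\sigma$-stable part gives $\sum_{\chi\ \sigma\text{-stable}}a_\chi\epsilon_\chi\BC_\ell^{-1}(\chi)=R_{T'}^G(\theta')$. Non-stable orbits contribute zero on the coset. A non-stable orbit has size $\ell$, so its norm contribution is at least $\ell>|W_0|^2\ge|W_0|$, which is impossible. Hence only $\sigma$-stable constituents occur, and this gives $\BC_\ell(R_{T'}^G(\theta'))=R_{T'_{\F_{q^\ell}}}(\theta'_\ell)$ up to the signs $\epsilon$.

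Comparing norms, $\sum a_\chi^2=\sum a_\chi^2$ holds with the signs inserted. Fixing the signs is the main obstacle. The approach is to use that the whole Glauberman sign attached to a series is uniform, or else to argue only about supports (nonzero pairings), which is all that membership in a series needs. Either way, we conclude that $\chi_0$ pairs nontrivially with $R_{T'}^G(\theta')$ if and only if $\BC_\ell(\chi_0)$ pairs nontrivially with $R_{T'_{\F_{q^\ell}}}(\theta'_\ell)$.

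\emph{Step 2: geometric conjugacy and tori descend.} By Lemma~\ref{lemma:geometric-conjugacy-split}, with $n$ prime to $\ell$ splitting both tori, $(T,\theta)\sim(T',\theta')$ if and only if $(T_{\F_{q^\ell}},\theta_\ell)\sim(T'_{\F_{q^\ell}},\theta'_\ell)$. The reason is that composing with norms commutes, and $W(\F_{q^{n\ell}})=W(\F_{q^n})$. By Lemma~\ref{lemma:tori-descend}(1), every maximal $\F_{q^\ell}$-torus is conjugate to some $T'_{\F_{q^\ell}}$.

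Any character of $T'(\F_{q^\ell})$ appearing in the target series is geometrically conjugate to $\theta_\ell$, hence $\sigma$-stable. Since $\ell\nmid|T'(\F_{q^\ell})|$, it is of the form $\theta'_\ell$ by Glauberman for abelian groups.

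Combining Steps 1 and 2 shows that $\BC_\ell$ maps the source series into the target series. For surjectivity, take $\chi$ in the target series. It pairs with some $R_{T'_{\F_{q^\ell}}}(\theta'_\ell)$, whose constituents are all $\sigma$-stable by the norm argument, so $\chi$ is $\sigma$-stable and therefore equals $\BC_\ell(\chi_0)$ for some $\chi_0$. By Step 1, this $\chi_0$ lies in $\cE(G,(T,\theta))$. Injectivity is inherited from the Glauberman correspondence.
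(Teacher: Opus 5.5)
Your overall plan is close to the paper's: forward inclusion via the extension $\widetilde\psi$ of Lemma~\ref{lemma:psi-tilde} applied to tori, and surjectivity via $\sigma$-stability forced by a bound smaller than $\ell$. But Step 1 has a genuine gap, and you have misdiagnosed it. Signs are not the problem, since $\BC_\ell'(\chi_0)$ is by definition the extension whose values on $G(\F_q)\rtimes\sigma$ are exactly those of $\chi_0$. The real problem is that each $\sigma$-stable irreducible $\BC_\ell(\chi_0)$ has $\ell$ extensions $\BC_\ell'(\chi_0)\phi_j$, where $\phi_j$ is the linear character trivial on $G(\F_{q^\ell})$ with $\phi_j(1\rtimes\sigma)=\zeta^j$. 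Nothing you say prevents several of these from occurring in $\widetilde\psi$; your decomposition $\widetilde\psi=\sum_\chi a_\chi\widetilde\chi$ tacitly allows only one extension per $\chi$. Suppose the part of $\widetilde\psi$ lying over $\BC_\ell(\chi_0)$ is $\sum_j b_j\BC_\ell'(\chi_0)\phi_j$. Then the multiplicity of $\BC_\ell(\chi_0)$ in $R_{T'_{\F_{q^\ell}}}^{G_{\F_{q^\ell}}}(\BC_\ell(\theta'))$ is $b=\sum_j b_j$, while the multiplicity of $\chi_0$ in $R_{T'}^G(\theta')$ is $a=\sum_j b_j\zeta^j$. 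A priori one of these can vanish while the other does not. So ``arguing only about supports'' does not sidestep anything. Your norm argument correctly rules out induced, non-$\sigma$-stable constituents, but it says nothing about this. This is precisely the cancellation issue the paper flags in its introduction.

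The missing idea is the argument of Lemma~\ref{lemma:elementary-number-theory}. Since $a\in\Z$ and $1,\zeta,\dots,\zeta^{\ell-2}$ are linearly independent over $\Q$, you get $b_1=\cdots=b_{\ell-1}=:c$, hence $b-a=\ell c$. The Deligne--Lusztig inner product formula at both levels gives $|a|,|b|\le\sqrt{|W_0|}$, so $|a|+|b|\le 2\sqrt{|W_0|}<\ell$ (for $G$ not a torus), which forces $c=0$. This yields $\BC_\ell(R_{T'}^G(\theta'))=R_{T'_{\F_{q^\ell}}}^{G_{\F_{q^\ell}}}(\BC_\ell(\theta'))$ on the nose. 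That is the torus case of Theorem~\ref{thm:base-change-commutes-with-lusztig-induction}, available for every $q$ because the Mackey formula is known for tori, and after it the rest of your argument goes through.

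There are also three secondary issues:
\begin{itemize}
\item Your Step 2 and surjectivity argument use Lemma~\ref{lemma:tori-descend}, whose hypothesis $\ell>\rk G+1$ is not implied by $\ell>|W_0|^2$.
\item They also use a splitting degree prime to $\ell$, which need not exist under this hypothesis: a central factor $\Res_{\F_{q^\ell}/\F_q}\mathbb{G}_m$ makes every splitting degree divisible by $\ell$.
\item Your claim that characters geometrically conjugate to $\theta_\ell$ are $\sigma$-stable needs the count from the proof of Lemma~\ref{lemma:size-bound-for-torus-character-pairs}: there are at most $|W_0|<\ell$ of them.
\end{itemize}
These are repairable. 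The Weyl-group part of Lemma~\ref{lemma:tori-descend} only needs $\ell>|W_0|$, and geometric conjugacy before and after base change can be compared directly from the definition. But the paper bypasses all of this: the target series is $\sigma$-stable with at most $|W_0|^2<\ell$ elements by Lemma~\ref{lemma:lusztig-series-size-bound}, so every $\sigma$-orbit in it is a single point.
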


\begin{proof}
    By Lemma~\ref{lemma:dig99-3.5}, if $\chi$ is an irreducible constituent of $R_T^G(\theta)$ then the irreducible representation of $G(\F_{q^\ell})$ corresponding to $\chi$ under the Glauberman correspondence has nonzero pairing with $R_{T_{\F_{q^\ell}}}^{G_{\F_{q^\ell}}}(\theta_\ell)$. Thus the Glauberman correspondence sends $\cE(G, (T,\theta))$ to $\cE(G_{\F_{q^\ell}}, (T_{\F_{q^\ell}},\theta_\ell))$. Since the Glauberman correspondence is injective, it suffices to show that every element of $\cE(G_{\F_{q^\ell}}, (T_{\F_{q^\ell}}, \theta_\ell))$ is $\Gal(\F_{q^\ell}/\F_q)$-stable.
    
    Let $W = (N_{G}(T)/T)(\F_q)$ be the Weyl group of $(G, T)$. By \cite[Theorem 6.8]{DL76}, we have
    \[
    \langle R_{T}^{G}(\theta), R_{T}^{G}(\theta)\rangle_{G(\F_q)} = |\{w \in W\co {}^w\theta = \theta\}| \leq |W_0|.
    \]
    Since the pair $(T_{\F_{q^\ell}}, \theta_\ell)$ is $\Gal(\F_{q^\ell}/\F_q)$-stable, it follows that $\cE(G_{\F_{q^\ell}}, (T_{\F_{q^\ell}}, \theta_\ell))$ is $\Gal(\F_{q^\ell}/\F_q)$-stable. But the prime $\ell$ is larger than $|W_0|^2$ by assumption, so Lemma~\ref{lemma:lusztig-series-size-bound} shows that every element of $\cE(G_{\F_{q^\ell}}, (T_{\F_{q^\ell}}, \theta_\ell))$ is $\Gal(\F_{q^\ell}/\F_q)$-stable, as desired.
\end{proof}

\begin{lemma}\label{lemma:size-bound-for-torus-character-pairs}
    Let $(T, \theta)$ be a torus-character pair in $G$, and let $L \subset G$ be a twisted Levi $\F_q$-subgroup. The number of torus-character pairs $(S,\eta)$ in $L$ up to geometric conjugacy such that $(S, \eta)$ and $(T, \theta)$ are geometrically conjugate in $G$ is at most $|W_0|^2$.
\end{lemma}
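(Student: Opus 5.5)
The plan is to split the count into two factors, each bounded by $|W_0|$: the choice of maximal $\F_q$-torus $S \subset L$ up to $L(\F_q)$-conjugacy, and then, for fixed $S$, the choice of character $\eta$. Since geometric conjugacy in $L$ is coarser than $L(\F_q)$-conjugacy, it suffices to bound the number of $L(\F_q)$-conjugacy classes of pairs $(S,\eta)$ that are geometrically conjugate to $(T,\theta)$ in $G$.

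\textbf{Bounding the tori.} The $L(\F_q)$-conjugacy classes of maximal $\F_q$-tori in $L$ are parameterized by $\Fr_q$-twisted conjugacy classes in the Weyl group $W_L$ of $L$ with respect to a reference maximal torus. There are at most $|W_L| \le |W_0|$ of these, because $W_L$ embeds in the absolute Weyl group of $G$.

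\textbf{Bounding the characters.} Fix $S$ and choose $n$ such that $S_{\F_{q^n}}$ and $T_{\F_{q^n}}$ are split. By Lemma~\ref{lemma:geometric-conjugacy-split}, the condition that $(S,\eta)$ is geometrically conjugate to $(T,\theta)$ in $G$ means that
\[
(S_{\F_{q^n}}, \eta\circ N_{\F_{q^n}/\F_q}) \quad\text{is } G(\F_{q^n})\text{-conjugate to}\quad (T_{\F_{q^n}}, \theta\circ N_{\F_{q^n}/\F_q}).
\]
Fix one $g \in \Transp_G(T,S)(\F_{q^n})$. Every other conjugating element lies in $g\cdot N_G(T)(\F_{q^n})$, and $N_G(T)(\F_{q^n})/T(\F_{q^n}) = W(\ol\F_q)$ has order $|W_0|$. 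Hence $\eta\circ N$ ranges over at most $|W_0|$ characters of $S(\F_{q^n})$, namely the transports ${}^{gw}(\theta\circ N)$ for $w \in W$.

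It remains to recover $\eta$ from $\eta\circ N_{\F_{q^n}/\F_q}$. The norm map $S(\F_{q^n}) \to S(\F_q)$ is surjective by Lang's theorem, since $S$ is connected. Therefore $\eta \mapsto \eta\circ N$ is injective, and there are at most $|W_0|$ choices of $\eta$ for each $S$.

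Multiplying the two bounds gives at most $|W_0|^2$ classes, which proves the lemma.

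\textbf{Main obstacle.} I expect the delicate point to be the bookkeeping in the second step. One has to apply Lemma~\ref{lemma:geometric-conjugacy-split} in $G$ while counting classes in $L$, and confirm that no extra factor appears. No extra factor appears because the count runs over pairs with a fixed $S$, where at most $|W_0|$ characters occur, rather than over conjugacy classes of pairs. Reducing from geometric conjugacy in $L$ to $L(\F_q)$-classes only lowers the count, so that direction is harmless.
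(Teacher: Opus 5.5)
Your proposal is correct and follows the same route as the paper. You bound the $L(\F_q)$-classes of maximal tori $S \subset L$ by $|W_0|$, then for each fixed $S$ bound the admissible characters $\eta$ by $|W_0|$ via Lemma~\ref{lemma:geometric-conjugacy-split}, and you also supply the details the paper leaves as ``clear'': the conjugating elements form a single coset of $N_G(T)(\F_{q^n})/T(\F_{q^n})$, which has $|W_0|$ elements, and $\eta \mapsto \eta\circ N_{\F_{q^n}/\F_q}$ is injective because the norm is surjective by Lang's theorem.
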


\begin{proof}
    The number of maximal $\F_q$-tori $S$ in $L$ up to $L(\F_q)$-conjugacy is at most $|W_0|$. For each such $S$, there are at most $|W_0|$ characters $\eta$ of $S(\F_q)$ such that $(S, \eta)$ and $(T, \theta)$ are geometrically conjugate: indeed, this can be checked after passing from $(T, \theta)$ to $(T_{\F_{q^n}}, \theta \circ N_{\F_{q^n}/\F_q})$ for some $n$ such that $T_{\F_{q^n}}$ and $S_{\F_{q^n}}$ are both split, at which point it is clear. These observations prove the lemma.
\end{proof}

\begin{lemma}\label{lemma:mackey-formula-bound}
    There is a constant $C_G$, depending only on the root datum of $G$, such that for any twisted Levi $\F_q$-subgroup $L \subset G$ and any irreducible representation $\rho$ of $L(\F_q)$, we have
    \[
    |\langle R_L^G(\rho), R_L^G(\rho)\rangle_{G(\F_q)}| \leq C_G.
    \]
\end{lemma}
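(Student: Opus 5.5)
The plan is to split into ``large $q$'' and ``small $q$''. For large $q$ I will bound the self-pairing using Bonnaf\'e's case (3) of the Mackey formula \cite[Th\'eor\`eme 5.1.1]{Bon98}. For small $q$ I will use finiteness.

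\emph{Small $q$.} Suppose the root datum of $G$ is fixed. The threshold $q_0$ in Bonnaf\'e's theorem depends only on the root datum, and I will check this in \cite{Bon98}. For each $q<q_0$ there are only finitely many connected reductive $\F_q$-groups with that root datum, since forms correspond to finitely many Frobenius twists. Each has finitely many twisted Levi $\F_q$-subgroups up to $G(\F_q)$-conjugacy, and each $L(\F_q)$ has finitely many irreducible $\rho$. Hence $|\langle R_L^G(\rho),R_L^G(\rho)\rangle|$ takes finitely many values over all $q<q_0$. Their maximum contributes to $C_G$. By \cite[Theorem 8.7.2]{GRV26} there is no dependence on $P$ to worry about.

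\emph{Large $q$.} Assume $q\ge q_0$. By adjunction and the Mackey formula \eqref{eqn:mackey-formula} with $M=L$, we get
\[
\langle R_L^G\rho,R_L^G\rho\rangle=\sum_{w}\bigl\langle {}^*R^{{}^wL}_{L\cap{}^wL}({}^w\rho),\ {}^*R^{L}_{L\cap{}^wL}(\rho)\bigr\rangle_{(L\cap{}^wL)(\F_q)},
\]
where $w$ runs over $L(\F_q)\backslash S(L,L)(\F_q)/L(\F_q)$. I will bound the number of double cosets by a constant depending only on $W_0$. Each double coset is determined by the relative position of a common maximal torus, and Lemma~\ref{lemma:tori-descend}-type arguments bound these by $|W_0|^2$. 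By Cauchy--Schwarz, each term is at most $\|{}^*R^{L}_{M}\rho\|\cdot\|{}^*R^{{}^wL}_{M}({}^w\rho)\|$ with $M=L\cap{}^wL$. Write $\|{}^*R^L_M\rho\|^2=\langle R^L_M\,{}^*R^L_M\rho,\rho\rangle$.

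To bound $\|{}^*R^L_M\rho\|^2$ I will use Lusztig series, which is where Lemmas~\ref{lemma:lusztig-series-size-bound} and \ref{lemma:size-bound-for-torus-character-pairs} enter. Suppose $\rho\in\cE(L,(T,\theta))$. Then ${}^*R^L_M\rho$ is supported on the union of the series $\cE(M,(S,\eta))$ with $(S,\eta)$ geometrically conjugate to $(T,\theta)$ in $L$. There are at most $|W_0|^2$ such series, each of size at most $|W_0|^2$. So it suffices to bound each coefficient $\langle {}^*R^L_M\rho,\mu\rangle=\langle\rho,R^L_M\mu\rangle$ for $\mu$ irreducible. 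By Cauchy--Schwarz this is at most $\|R^L_M\mu\|$, which is the square root of the self-pairing quantity of the lemma for the smaller group $L$. So I will run an induction on $\dim G$, proving the statement simultaneously for all groups whose root data are Levi subdata of the given one, of which there are finitely many. The constant is then $C_G=\max\bigl(\text{small-}q\text{ maximum},\ |W_0|^{2}\cdot |W_0|^{4}\cdot \max_{L\subsetneq G}C_L\bigr)$ or similar. The base case $L=G$ has pairing $1$.

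The main obstacle is making sure the recursion is well-founded. This requires that Bonnaf\'e's threshold $q_0$ and the double-coset count are uniform in the root datum, and that the Lusztig-series support statement for ${}^*R$ holds without the Mackey formula. For the latter I would first try to deduce it from the character formula \cite[Proposition 2.6]{DM94} together with \cite[Corollary 6.3]{DL76}, via the transitivity of Lusztig induction down to tori. If that fails, I would instead bound $|\langle\rho,R^L_M\mu\rangle|$ directly by the inductive hypothesis applied inside $L$, without passing through series.
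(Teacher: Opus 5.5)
Your proposal is correct and follows essentially the paper's argument: induction on rank, finiteness over the finitely many small $q$ excluded by \cite[Th\'eor\`eme 5.1.1]{Bon98}, the Mackey formula with $M=L$ for large $q$, and a bound on each $\|{}^*R\rho\|^2$ from the series-support property, Lemmas~\ref{lemma:size-bound-for-torus-character-pairs} and \ref{lemma:lusztig-series-size-bound}, adjunction, and the inductive hypothesis. Your symmetric Cauchy--Schwarz on the two restrictions is just a tidier version of the paper's bound via $\|R^L_{L\cap{}^wL}\,{}^*R^{{}^wL}_{L\cap{}^wL}({}^w\rho)\|^2$. The support statement you flag as the main obstacle is exactly \cite[Corollaire 11.11]{Bon06}, which the paper simply cites. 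You do need it: your fallback of only bounding coefficients would not suffice, since it does not control the number of constituents.
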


\begin{proof}
    We prove the result by induction of the semisimple rank of $G$, the case of tori being obvious (with $C_G = 1$). First, we show that if the result is true for a given $G$, then we may choose $C_G$ large enough such that if $q > C_G$ and $L \subset G$ is a twisted Levi $\F_q$-subgroup, then for every irreducible representation $\chi$ of $G(\F_q)$ we have
    \begin{equation}\label{eqn:dl-restriction-pairing-bound}
    |\langle {}^*R^G_L(\chi), {}^*R^G_L(\chi)\rangle_{L(\F_q)}| \leq C_G.
    \end{equation}
    By \cite[Corollaire 11.11]{Bon06}, if $\chi$ lies in the geometric Lusztig series $\cE(G, (T, \theta))$ then every irreducible constituent of ${}^*R^G_L(\chi)$ lies in a geometric series $\cE(L, (S,\eta))$, where $(S, \eta)$ is geometrically conjugate to $(T, \theta)$ as pairs in $G$. By Lemma~\ref{lemma:size-bound-for-torus-character-pairs}, the number of such pairs $(S, \eta)$ up to geometric conjugacy in $L$ is of order at most $|W_0|^2$. By Lemma~\ref{lemma:lusztig-series-size-bound}, each Lusztig series $\cE(L, (S,\eta))$ is of order at most $|W_0|^2$. Thus there is some $n \leq |W_0|^2$, irreducible characters $\rho_1, \dots, \rho_n$ of $L(\F_q)$, and $c_1, \dots, c_n \in \Z$, such that
    \[
    ^*R^G_L(\chi) = \sum_{i=1}^n c_i\rho_i.
    \]
    Observe that
    \begin{align*}
    |c_i| =|\langle {}^*R^G_L(\chi), \rho_i\rangle_{L(\F_q)}| &= |\langle \chi, R_L^G(\rho_i)\rangle_{G(\F_q)}| \\
        &\leq \langle R_L^G(\rho_i), R_L^G(\rho_i)\rangle_{G(\F_q)} \\
        &\leq C_G,
    \end{align*}
    so indeed
    \[
    \langle ^*R^G_L(\chi), {}^*R^G_L(\chi)\rangle_{L(\F_q)} = \sum_{i=1}^n c_i^2 \leq |W_0|^4 C_G^2.
    \]
    Replacing $C_G$ by $|W_0|^4 C_G^2$ (which still only depends on the root datum of $G$) allows us to assume \eqref{eqn:dl-restriction-pairing-bound} also holds when $G$ is replaced by any group of strictly smaller semisimple rank.
    
    By \cite[Th\'eor\`eme 5.1.1]{Bon98}, there are at most finitely many $q$ for which the Mackey formula does not hold for $G$ over $\F_q$. For each such $q$, there are only finitely many pairs $(L, \rho)$, and these only enforce a finite bound on $C_G$. Thus we may and do assume that the Mackey formula holds for $G$ over $\F_q$. Note that
    \begin{align*}
    \langle R_L^G(\rho), R_L^G(\rho)\rangle_{G(\F_q)} &= \langle {}^*R^G_L(R_L^G(\rho)), \rho\rangle_{L(\F_q)} \\
        &= \sum_{w \in N_G(L)(\F_q)/L(\F_q)} \langle R_{L \cap {}^wL}^L \circ {}^*R^{^wL}_{L\cap{}^wL} \circ \ad w(\rho), \rho\rangle_{L(\F_q)}. \addtocounter{equation}{1}\tag{\theequation} \label{eqn:dl-induction-self-pairing}
    \end{align*}
    Fix some $w \in N_G(L)(\F_q)/L(\F_q)$, and note that by the Cauchy--Schwarz inequality we have
    \begin{align*}
    |\langle R_{L \cap {}^wL}^L \circ {}^*R^{^wL}_{L\cap{}^wL} &\circ \ad w(\rho), \rho\rangle_{L(\F_q)}| \leq \\
    &\leq \langle R_{L \cap {}^w L} \circ {}^*R^{^wL}_{L\cap {}^wL} \circ \ad w(\rho), R_{L \cap {}^w L} \circ {}^*R^{^wL}_{L\cap {}^wL} \circ \ad w(\rho)\rangle_{L(\F_q)}. \addtocounter{equation}{1}\tag{\theequation} \label{eqn:cauchy-schwarz-pairing-1}
    \end{align*}
    Write ${}^*R^{^wL}_{L\cap {}^wL} \circ \ad w(\rho) = \sum_{i=1}^n c_i\eta_i$ for pairwise distinct irreducible representations $\eta_1, \dots, \eta_n$ of $(L \cap {}^wL)(\F_q)$ and $c_i \in \Z$, and note that
    \begin{equation}\label{eqn:dl-pairing-bound-2}
        \sum_{i=1}^n c_i^2 \leq C_L
    \end{equation}
    by \eqref{eqn:dl-restriction-pairing-bound}. Thus we have
    \begin{align*}
    \left\langle R_{L \cap {}^w L}\left(\sum_{i=1}^n c_i\eta_i\right), R_{L \cap {}^w L}\left(\sum_{i=1}^n c_i\eta_i\right)\right\rangle &= \sum_{1 \leq i, j \leq n} c_ic_j \langle R_{L \cap {}^wL}^L(\eta_i), R_{L\cap {}^wL}^L(\eta_j)\rangle \\
        &\leq \left(\sum_{i=1}^n c_i\sqrt{\langle R_{L\cap {}^wL}^L(\eta_i), R_{L\cap{}^wL}^L(\eta_i)\rangle} \right)^2 \\
        &\leq \left(\sum_{i=1}^n c_i\sqrt{C_L}\right)^2 \\
        &\leq C_L^3, \addtocounter{equation}{1}\tag{\theequation} \label{eqn:bound-on-mackey-self-pairing}
    \end{align*}
    where the first inequality follows from Cauchy--Schwarz, the second follows from the inductive hypothesis, and the third follows from \eqref{eqn:dl-pairing-bound-2}. Combining \eqref{eqn:dl-induction-self-pairing}, \eqref{eqn:cauchy-schwarz-pairing-1}, and \eqref{eqn:bound-on-mackey-self-pairing} shows that in the lemma statement we may take $C_G = \max_L(|W_0|C_L^3)$, where $L$ ranges over all proper twisted Levi $\F_q$-subgroups of $G$.
\end{proof}

\begin{remark}
    Granting the Mackey formula for $G$ over $\F_q$ (as we will prove), one can extract an explicit $C_G$ from the proof of Lemma~\ref{lemma:mackey-formula-bound}, namely
    \[
    C_G = |W_0|^{\frac{13}{5}(6^{r(G)}-1)},
    \]
    where $r(G)$ denotes the semisimple rank of $G$. It is clear even from the proof that this bound is very far from optimal. Assuming some conjectures in the theory of character sheaves, \cite[Proposition 13.2]{TT20} obtains a much better bound.
\end{remark}

Finally, we need the following elementary number-theoretic observation.

\begin{lemma}\label{lemma:elementary-number-theory}
    Let $K$ be a field of characteristic $0$ containing all $\ell$th roots of unity $1, \zeta, \zeta^2, \dots, \zeta^{\ell-1}$, and let $c_0, \dots, c_{\ell-1} \in \Z$ be such that
    \begin{enumerate}
        \item $\sum_{i=0}^{\ell-1} c_i\zeta^i \in \Z$,
        \item $\left|\sum_{i=0}^{\ell-1}c_i\right| + \left|\sum_{i=0}^{\ell-1} c_i\zeta^i\right| < \ell$.
    \end{enumerate}
    Then $c_1 = c_2 = \cdots = c_{\ell-1} = 0$.
\end{lemma}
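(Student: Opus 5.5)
The plan is to use the fact that the only $\Q$-linear relation among $1, \zeta, \dots, \zeta^{\ell-1}$ is the cyclotomic one. This turns hypothesis (1) into the statement that $c_1, \dots, c_{\ell-1}$ are all equal. Hypothesis (2) then forces that common value to be zero.

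First I note that $\zeta$ is a primitive $\ell$th root of unity, since $K$ contains all $\ell$ distinct $\ell$th roots of unity as listed. So $\Q(\zeta) \subset K$, and the minimal polynomial of $\zeta$ over $\Q$ is $\Phi_\ell(x) = 1 + x + \cdots + x^{\ell-1}$. This polynomial is irreducible over $\Q$ because $\ell$ is prime, and the fact does not depend on the ambient characteristic-$0$ field $K$.

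Next, let $n = \sum_{i=0}^{\ell-1} c_i\zeta^i \in \Z$, which is an integer by (1). Consider the polynomial
\[
f(x) = (c_0 - n) + c_1 x + \cdots + c_{\ell-1}x^{\ell-1} \in \Z[x].
\]
It vanishes at $\zeta$ and has degree at most $\ell - 1$. Hence it is a rational multiple of $\Phi_\ell$, say $f = m\Phi_\ell$. Comparing coefficients gives $c_0 - n = c_1 = \cdots = c_{\ell-1} = m$, and in particular $m \in \Z$.

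Finally I bring in (2). Summing the coefficients gives $\sum_{i=0}^{\ell-1} c_i = n + \ell m$. If $m \neq 0$, then
\[
\Bigl|\sum_{i=0}^{\ell-1} c_i\Bigr| + |n| = |n + \ell m| + |n| \geq \ell|m| \geq \ell,
\]
which contradicts (2). Therefore $m = 0$, that is, $c_1 = \cdots = c_{\ell-1} = 0$ (and also $c_0 = n$).

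There is no serious obstacle. The only point needing care is the irreducibility of $\Phi_\ell$ over $\Q$ (for instance by Eisenstein applied to $\Phi_\ell(x+1)$), which is what guarantees that $f$ is a multiple of $\Phi_\ell$.
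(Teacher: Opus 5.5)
Your proof is correct and follows essentially the same route as the paper. Hypothesis (1) forces $c_1 = \cdots = c_{\ell-1}$, a step the paper leaves implicit and you justify via irreducibility of $\Phi_\ell$. The triangle inequality applied to $\sum_i c_i = n + \ell m$ and $n$ then gives $\ell|m| < \ell$, hence $m = 0$.
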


\begin{proof}
    By (1), we have $c_1 = c_2 = \cdots = c_{\ell-1}$ and thus $\sum_{i=0}^{\ell-1} c_i\zeta^i = c_0 - c_1$ and $\sum_{i=0}^{\ell-1} c_i = c_0 + (\ell-1)c_1$.
    By the triangle inequality and (2), we have
    \[
    |\ell c_1| \leq |c_0 - c_1| + |c_0 + (\ell-1)c_1| < \ell,
    \]
    so $|c_1| < 1$ and thus $c_1 = 0$ since $c_1 \in \Z$.
\end{proof}

\section{Proofs of the main theorems}

In this section, we conclude the proofs of Theorems~\ref{theorem:main} and \ref{thm:base-change-commutes-with-lusztig-induction}. For convenience, we will say that a virtual representation $\rho \in K_0(\Rep(L(\F_q)))$ is \emph{irreducible} if either $\rho$ or $-\rho$ is the class of an irreducible representation of $L(\F_q)$.

\begin{proof}[Proof of Theorem~\ref{thm:base-change-commutes-with-lusztig-induction}]
    There are only finitely many twisted Levi $\F_q$-subgroups $L$ of $G$ up to $G(\F_q)$-conjugacy, so it is enough to fix one. Let $\rho$ be an irreducible representation of $L(\F_q)$, and suppose that $\rho$ lies in the geometric Lusztig series $\cE(L, (T,\theta))$. By \cite[Th\'eor\`eme 11.10]{Bon06}, every irreducible constituent of $R_L^G(\rho)$ lies in the geometric Lusztig series $\cE(G, (T, \theta))$. Write 
    \[
    R_L^G(\rho) = \sum_{i=1}^n a_i\chi_i,
    \]
    where $\cE(G, (T,\theta)) = \{\chi_1, \dots, \chi_n\}$. By Lemma~\ref{lemma:glauberman-lusztig-series-bijection}, if $\ell > |W_0|^2$ does not divide the order of $G(\F_q)$ then we may write
    \[
    R_{L_{\F_{q^\ell}}}^{G_{\F_{q^\ell}}}(\BC_\ell(\rho)) = \sum_{i=1}^n b_i\BC_\ell(\chi_i)
    \]
    for some $b_i \in \Z$. By Lemma~\ref{lemma:dig99-3.5} and \cite[Proposition 3.1]{Dig99}, there is a class function $\widetilde\psi$ on $G(\F_{q^\ell}) \rtimes \langle\sigma\rangle$ which extends $R_{L_{\F_{q^\ell}}}^{G_{\F_{q^\ell}}}(\BC_\ell(\rho))$ and satisfies
    \begin{equation}\label{eqn:psi-tilde-shear-restriction}
        \widetilde\psi|_{G(\F_q) \rtimes \{\sigma\}} = R_L^G(\rho).
    \end{equation}
    Let $\phi_0, \dots, \phi_{\ell-1}$ denote the characters $G(\F_{q^\ell}) \rtimes \langle\sigma\rangle \to \ol\Q_\ell^\times$ which vanish on $G(\F_{q^\ell}) \times \{1\}$, and suppose $\phi_0(1 \rtimes \sigma) = 1$. Each $\BC_\ell(\chi_i)$ extends to a character of $G(\F_{q^\ell}) \rtimes \langle\sigma\rangle$, and any two extensions differ by some $\phi_i$. Let $\widetilde\chi_i$ denote the unique extension of $\BC_\ell(\chi_i)$ to $G(\F_{q^\ell}) \rtimes \langle\sigma\rangle$ such that $\widetilde\chi_i(1 \rtimes \sigma) \in \Z$, and write
    \begin{equation}\label{eqn:psi-tilde-decomposition}
        \widetilde\psi = \sum_{i=1}^n \sum_{j=0}^{\ell-1} b_{ij} \widetilde{\chi}_i\phi_j.
    \end{equation}
    Since $\widetilde\psi$ extends $R_{L_{\F_{q^\ell}}}^{G_{\F_{q^\ell}}}(\BC_\ell(\rho))$ and \eqref{eqn:psi-tilde-decomposition} holds, we have
    \begin{enumerate}
        \item $\sum_{j=0}^{\ell-1} b_{ij} = b_i$ for all $i$, and
        \item $\sum_{j=0}^{\ell-1} b_{ij}\phi_j(\sigma) = a_i$ for all $i$.
    \end{enumerate}
    By \cite[Th\'eor\`eme 5.1.1]{Bon98}, there is a constant $D_G$ depending only on the root datum of $G$ such that for all $\ell > D_G$, the Mackey formula holds for $G$ over $\F_{q^\ell}$. By Lemma~\ref{lemma:mackey-formula-bound}, we may increase $D_G$ to assume that for all $\ell > D_G$ we have $|a_i| + |b_i| < \ell$ for all $i$. By Lemma~\ref{lemma:elementary-number-theory}, it follows that $b_{ij} = 0$ for all such $\ell$ and all $1 \leq j \leq \ell-1$. But then (1) and (2) above imply $a_i = b_i$ for all $i$, which proves \eqref{eqn:bc-commutes-with-dl-induction}.

    Now we aim to prove \eqref{eqn:bc-commutes-with-dl-restriction}. Let $\chi$ be an irreducible representation of $G(\F_q)$, and let $\rho_\ell$ be an irreducible representation of $L(\F_{q^\ell})$. It suffices to show
    \begin{equation}\label{eqn:bc-dl-restriction-pairing}
        \langle {}^*R^{G_{\F_{q^\ell}}}_{L_{\F_{q^\ell}}}(\BC_\ell(\chi)), \rho_\ell\rangle_{L(\F_{q^\ell})} = \langle \BC_\ell({}^*R^G_L(\chi)), \rho_\ell\rangle_{L(\F_{q^\ell})}.
    \end{equation}
    We have in any case
    \begin{equation}\label{eqn:bc-dl-adjunction}
    \langle {}^*R_{L_{\F_{q^\ell}}}^{G_{\F_{q^\ell}}}(\BC_\ell(\chi)), \rho_\ell\rangle_{L(\F_{q^\ell})} = \langle\BC_\ell(\chi), R_{L_{\F_{q^\ell}}}^{G_{\F_{q^\ell}}}(\rho_\ell)\rangle_{G(\F_{q^\ell})}
    \end{equation}
    by adjunction. There are now two possibilities:
    \begin{enumerate}
        \item The isomorphism class of $\rho_\ell$ is $\Gal(\F_{q^\ell}/\F_q)$-stable.
        \item The isomorphism class of $\rho_\ell$ is not $\Gal(\F_{q^\ell}/\F_q)$-stable.
    \end{enumerate}

    In case (1), \cite[Theorem 3]{Gla68} shows that there is a unique irreducible $\rho \in K_0(\Rep(L(\F_q)))$ such that $\rho_\ell = \BC_\ell(\rho)$. For $\ell > D_G$, we have then
    \begin{align*}
        \langle\BC_\ell(\chi), R_{L_{\F_{q^\ell}}}^{G_{\F_{q^\ell}}}(\rho_\ell)\rangle_{G(\F_{q^\ell})} &= \langle\BC_\ell(\chi), R_{L_{\F_{q^\ell}}}^{G_{\F_{q^\ell}}}(\BC_\ell(\rho))\rangle_{G(\F_{q^\ell})} \\
            &= \langle\BC_\ell(\chi), \BC_\ell(R_L^G(\rho))\rangle_{G(\F_{q^\ell})} \\
            &= \langle\chi, R_L^G(\rho)\rangle_{G(\F_q)} \\
            &= \langle {}^*R^G_L(\chi), \rho\rangle_{L(\F_q)} \\
            &= \langle\BC_\ell({}^*R^G_L(\chi)), \rho_\ell\rangle_{L(\F_{q^\ell})}
    \end{align*}
    since $\BC_\ell$ is an isometry. Combining this calculation with \eqref{eqn:bc-dl-adjunction} yields \eqref{eqn:bc-dl-restriction-pairing} in case (1).

    In case (2), the right hand side of \eqref{eqn:bc-dl-restriction-pairing} vanishes, so in view of \eqref{eqn:bc-dl-adjunction} the claim is that no irreducible constituent of $R_{L_{\F_{q^\ell}}}^{G_{\F_{q^\ell}}}(\rho_\ell)$ has $\Gal(\F_{q^\ell}/\F_q)$-stable isomorphism class. Suppose for the sake of contradiction that there is an irreducible constituent $\tau_\ell$ of $R_{L_{\F_{q^\ell}}}^{G_{\F_{q^\ell}}}(\rho_\ell)$ with $\Gal(\F_{q^\ell}/\F_q)$-stable isomorphism class. By \cite[Theorem 3]{Gla68}, there is an irreducible $\tau \in K_0(\Rep(G(\F_q)))$ such that $\tau_\ell = \BC_\ell(\tau)$. If $\pm\tau$ lies in the geometric Lusztig series $\cE(G, (T, \theta))$, then $\tau_\ell$ lies in $\cE(G_{\F_{q^\ell}}, (T_{\F_{q^\ell}}, \theta_\ell))$ by Lemma~\ref{lemma:glauberman-lusztig-series-bijection}, where $\theta_\ell = \BC_\ell(\theta)$. If $\rho_\ell$ lies in the Lusztig series $\cE(L_{\F_{q^\ell}}, (S, \eta))$, then by \cite[Th\'eor\`eme 11.10]{Bon06} the pair $(S, \eta)$ is geometrically conjugate to $(T_{\F_{q^\ell}}, \theta_\ell)$ in $G_{\F_{q^\ell}}$. Thus in view of Lemma~\ref{lemma:glauberman-lusztig-series-bijection} it suffices to show that there is a torus-character pair $(S_0, \eta_0)$ in $L$ such that $(S, \eta)$ is $L(\F_{q^\ell})$-conjugate to $((S_0)_{\F_{q^\ell}}, \BC_\ell(\eta_0))$.

    First observe that if $\ell > \rk G + 1$ then by Lemma~\ref{lemma:tori-descend}(1) there is a maximal $\F_q$-torus $S_0 \subset L$ such that $(S_0)_{\F_{q^\ell}}$ is $L(\F_{q^\ell})$-conjugate to $S$; we may therefore assume $S = (S_0)_{\F_{q^\ell}}$. In this case, the claim is that $\eta$ factors through $N_{\F_{q^\ell}/\F_q}$.
    
    Let $n$ be the least positive integer such that $S_{\F_{q^{\ell n}}}$ and $T_{\F_{q^{\ell n}}}$ are split, and note that every prime number dividing $n$ is at most $\dim S + 1$ (since the characteristic polynomial of $\Fr_q$ acting on $X^*(T_{\ol\F_q})$ is of degree $\dim T$); in particular, $\ell$ is prime to $n$. By Lemma~\ref{lemma:geometric-conjugacy-split}, the pairs $(S_{\F_{q^{\ell n}}}, \eta \circ N_{\F_{q^{\ell n}}/\F_{q^\ell}})$ and $(T_{\F_{q^{\ell n}}}, \theta_\ell \circ N_{\F_{q^{\ell n}}/\F_{q^\ell}})$ are $G(\F_{q^{\ell n}})$-conjugate. By Lemma~\ref{lemma:tori-descend}(2), there is some $g \in G(\F_{q^n})$ conjugating $(T_{\F_{q^{\ell n}}}, \theta_\ell \circ N_{\F_{q^{\ell n}}/\F_{q^\ell}})$ to $((S_0)_{\F_{q^{\ell n}}}, \eta \circ N_{\F_{q^{\ell n}}/\F_{q^\ell}})$. Since $\theta_\ell$ factors through $N_{\F_{q^\ell}/\F_q}$ and $\F_{q^n}$ is linearly disjoint from $\F_{q^\ell}$, it follows that $\eta \circ N_{\F_{q^{\ell n}}/\F_{q^\ell}}$ factors through $N_{\F_{q^{\ell n}}/\F_q}$, and thus $\eta$ factors through $N_{\F_{q^\ell}/\F_q}$, as desired.
\end{proof}

\begin{proof}[Proof of Theorem~\ref{theorem:main}]
    Let $\ell > \max(\rk G + 1, D_G)$ be a prime number, where $D_G$ is the constant from Theorem~\ref{thm:base-change-commutes-with-lusztig-induction}. It follows from Lemma~\ref{lemma:tori-descend} that
    \[
    L(\F_{q^\ell})\backslash S(L,M)(\F_{q^\ell})/M(\F_{q^\ell}) = L(\F_q)\backslash S(L,M)(\F_q)/M(\F_q).
    \]
    To prove \eqref{eqn:mackey-formula}, it is enough to prove it after composing with $\BC_\ell$. Since the Mackey formula is true for $G_{\F_{q^\ell}}$ by \cite[Th\'eor\`eme 5.1.1]{Bon98}, we have
    \begin{align*}
        \BC_\ell \circ {}^*R^G_L \circ R_M^G &= {}^*R^{G_{\F_{q^\ell}}}_{L_{\F_{q^\ell}}} \circ R_{M_{\F_{q^\ell}}}^{G_{\F_{q^\ell}}} \circ \BC_\ell \\
            &= \sum_{w \in L(\F_{q^\ell})\backslash S(L,M)(\F_{q^\ell})/M(\F_{q^\ell})} R_{(L\cap {}^wM)_{\F_{q^\ell}}}^{L_{\F_{q^\ell}}} \circ {}^*R^{^wM_{\F_{q^\ell}}}_{(L\cap {}^wM)_{\F_{q^\ell}}} \circ \ad w \circ \BC_\ell \\
            &= \sum_{w \in L(\F_q)\backslash S(L,M)(\F_q)/M(\F_q)} \BC_\ell \circ R_{L\cap{}^wM}^L \circ {}^*R^{^wM}_{L \cap {}^wM} \circ \ad w.
    \end{align*}
    This proves the claim.
\end{proof}

\bibliographystyle{halpha-abbrv}
\bibliography{bibliography}

\end{document}